\documentclass[final,3p]{elsarticle}
\usepackage[T1]{fontenc}
\usepackage{amsmath,amsthm,mathtools}
\usepackage{txfonts}
\usepackage{microtype}
\usepackage{booktabs}
\usepackage{placeins}
\usepackage[hidelinks]{hyperref}
\biboptions{sort&compress}

\newcommand{\R}{\mathbb{R}}
\newcommand{\E}{\mathbb{E}}
\newcommand{\KL}{\operatorname{KL}}
\newcommand{\Cov}{\operatorname{Cov}}
\newcommand{\Var}{\operatorname{Var}}
\newcommand{\tr}{\operatorname{tr}}
\newcommand{\supp}{\operatorname{supp}}
\newcommand{\conv}{\operatorname{conv}}
\newcommand{\aff}{\operatorname{aff}}
\newcommand{\interior}{\operatorname{int}}
\newcommand{\Prox}{\operatorname{Prox}}
\newcommand{\calR}{\mathcal{R}}
\newcommand{\calE}{\mathcal{E}}
\newcommand{\norm}[1]{\lVert#1\rVert}
\newtheorem{theorem}{Theorem}
\newtheorem{lemma}[theorem]{Lemma}
\newtheorem{proposition}[theorem]{Proposition}
\newtheorem{corollary}[theorem]{Corollary}
\journal{Applied Mathematics Letters}
\makeatletter
\def\ps@pprintTitle{\let\@oddhead\@empty\let\@evenhead\@empty
  \def\@oddfoot{\hfil\thepage\hfil}\let\@evenfoot\@oddfoot}
\makeatother

\begin{document}
\begin{frontmatter}
\title{Can one regularizer reproduce posterior means at two noise levels?}
\author[xjtu]{Kang Liu\corref{cor1}}
\ead{kanyo@foxmail.com}
\cortext[cor1]{Corresponding author.}

\affiliation[xjtu]{
  organization={School of Future Technology, Xi'an Jiaotong University},
  city={Xi'an},
  country={China}
}

\begin{abstract}
Consider Gaussian denoising with a fixed prior and varying noise
variance. When the prior has a density, maximum a posteriori
estimation uses its negative logarithm as a fixed regularizer;
only the weight of the quadratic data term changes with the noise
variance. We ask whether a fixed regularizer can also recover the
posterior mean. Although a suitable regularizer exists at each
variance, we prove that no single regularizer works at two distinct
positive variances for any compactly supported prior other than a
point mass. This holds even for nonconvex and nonsmooth regularizers.
A residual computed from two denoiser evaluations gives positive
lower bounds on the uniform approximation error and excess mean
squared error of estimators using a common regularizer. Numerical
examples illustrate these bounds and the cost of reusing a
regularizer across noise levels.
\end{abstract}
\begin{keyword}
Bayesian denoising \sep Proximity operator \sep Regularization
\sep Exponential family \sep Posterior mean
\end{keyword}
\end{frontmatter}

\section{Introduction}

In Gaussian denoising, the prior describes the signal distribution,
while the noise variance determines how strongly an estimate should
follow the observation. When the prior has a density, maximum a
posteriori estimation minimizes its negative logarithm plus a
quadratic data term. Keeping the prior fixed therefore keeps the
regularizer fixed; only the weight of the data term changes with
the noise variance. The posterior mean instead minimizes mean
squared error. Can it be obtained using one regularizer across
noise levels, with the variance entering only through the weight
of the quadratic data term?

At each fixed variance, the posterior mean admits such a variational
representation, although the regularizer generally differs from
the negative log prior. Gribonval \cite{gribonval2011should}
established this representation for Gaussian denoising, and
Gribonval and Machart \cite{gribonval2013reconciling} extended it
to linear inverse problems with colored Gaussian noise.
Posterior estimation has also been studied for total variation
priors by Louchet and Moisan \cite{louchet2013posterior} and for
sparse stochastic processes by Amini et al.\ \cite{amini2013bayesian}.
Gribonval and Nikolova \cite{gribonval2020characterization}
characterize proximity operators of possibly nonconvex penalties,
and their Bayesian analysis \cite{gribonval2021bayesian} explicitly
allows the representing penalty to depend on the variance.
For log-concave priors, Darbon and Langlois
\cite{darbon2021posterior} describe matched regularizers through
Hamilton--Jacobi equations. These results establish a representation
at each variance, but leave open whether one regularizer can
represent the posterior mean at two prescribed variances.

Cross-variance reuse has been investigated by Nguyen et al.\
\cite{nguyen2018learning}, who use proximal scaling to apply learned
convex regularizers at other noise levels and report close agreement
with Bayesian denoising in their process models. The question here
is whether this agreement can be exact at every observation.
It is also relevant to reconstruction methods that use denoisers
in proximal iterations \cite{venkatakrishnan2013plug} or construct
regularization objectives from them \cite{romano2017little}.
Reehorst and Schniter \cite{reehorst2019red} identify Jacobian
symmetry as a necessary condition for a denoiser-derived vector
field to be a gradient. Gaussian posterior means already have
symmetric Jacobians through their relation to the marginal score
\cite{efron2011tweedie,raphan2011least}; compatibility across
variances imposes a further requirement.
Other connections between Bayesian and variational estimation
include Bregman losses for which MAP estimates are Bayes optimal
\cite{burger2014maximum}, learned proximal operators associated
with negative log densities \cite{fang2024prior,fang2025beyond},
and asymptotic matching through prior pairs
\cite{okudo2026matching}. In the present problem, both the prior
and the squared error loss remain fixed.

We prove that, for every compactly supported prior other than a
point mass, no single regularizer reproduces the posterior mean
at two distinct positive variances. The result allows nonconvex
and nonsmooth regularizers and follows from a comparison of their
canonical representations at a fixed posterior mean. A residual
computed from two denoiser evaluations quantifies the incompatibility,
giving positive lower bounds on uniform approximation error and
excess mean squared error for estimators using a common regularizer.
Numerical examples illustrate these bounds and the tradeoff when
one regularizer is used at both variances.

\section{Incompatibility at two noise variances}

Let $\mu$ be a probability distribution on $\R^d$, where $d\geq1$,
and consider
\[
X\sim\mu,
~~
Y_t=X+\sqrt{t}\,Z,
~~
Z\sim N(0,I_d).
\]
Here $Z$ is independent of $X$, $I_d$ is the $d\times d$ identity
matrix, and $t>0$ is the noise variance. The prior $\mu$ remains
fixed as $t$ varies. Throughout these two sections, its support
$K:=\supp\mu$ is compact.

We use the Euclidean norm $\norm{\cdot}$ and inner product
$\langle\cdot,\cdot\rangle$. The symbols $\conv K$ and $\aff K$
denote the convex and affine hulls of $K$, respectively, and
$\interior$ denotes interior in the ambient Euclidean space.
The Gaussian observation model defines the posterior mean at
every input $y\in\R^d$:
\begin{equation}
 D_t(y):=
 \frac{\int_K x\exp[-\norm{x-y}^2/(2t)]\,\mu(dx)}
 {\int_K\exp[-\norm{x-y}^2/(2t)]\,\mu(dx)}.
 \label{eq:denoiser}
\end{equation}
For a proper lower semicontinuous function
$R:\R^d\to\R\cup\{+\infty\}$, define
\begin{equation}
 \Prox_{tR}(y):=
 \mathop{\rm argmin}_{x\in\R^d}
 \left\{R(x)+\frac{\norm{x-y}^2}{2t}\right\}.
 \label{eq:prox}
\end{equation}
Properness means that $R$ is finite at some point. The set
$\Prox_{tR}(y)$ consists of all global minimizers; convexity of
$R$ is not assumed. For convex $R$, this is the classical
proximity operator \cite{moreau1965proximite}.

At a fixed variance, the posterior mean determines its representing
regularizer up to an additive constant on the range of the denoiser.
We therefore compare these regularizers at the same posterior mean
to determine whether one regularizer can work at two variances.
First suppose $\aff K=\R^d$. Define the log partition function $A_t$
and the open convex mean domain $\Omega$ by
\begin{equation}
 A_t(\theta):=\log\int_K e^{\theta^\top x-\norm{x}^2/(2t)}\,\mu(dx),
 ~~ \Omega:=\interior(\conv K),~ \theta\in\R^d.
 \label{eq:partition}
\end{equation}
The mean parametrization of a minimal exponential family
\cite[Proposition~3.1 and Theorem~3.3]{wainwright2008graphical}
gives a bijection $\nabla A_t:\R^d\to\Omega$.
The Hessian $\nabla^2A_t(\theta)$ is the covariance matrix under the
distribution proportional to $e^{\theta^\top x-\norm{x}^2/(2t)}\mu(dx)$.
It is positive definite because $K$ is not contained in a proper affine
subspace. This and the implicit function theorem \cite[Theorem~9.28]{rudin1976principles} give a smooth inverse
$\theta_t(z):=(\nabla A_t)^{-1}(z)$ for $z\in\Omega$.
Define the probability measure with mean $z$ and its total variance by
\begin{equation}
 \nu_{t,z}(dx):=e^{\theta_t(z)^\top x-\norm{x}^2/(2t)-A_t(\theta_t(z))}\mu(dx),
 ~~ V_t(z):=\E_{\nu_{t,z}}\norm{X-z}^2.
 \label{eq:tilt}
\end{equation}
Here $\E_{\nu_{t,z}}$ denotes expectation under $\nu_{t,z}$.
By construction, $\E_{\nu_{t,z}}X=z$ and $D_t(t\theta_t(z))=z$.
For a fixed $z\in\Omega$, these distributions have the same posterior
mean but correspond to different observations $t\theta_t(z)$.
The comparison below holds at this common output coordinate.
Write $A_t^*(z):=\sup_{\theta\in\R^d}
\{\theta^\top z-A_t(\theta)\}$ for the convex conjugate.
Relative entropy is $\KL(\nu\Vert\eta):=\int\log(d\nu/d\eta)\,d\nu$
when $\nu$ is absolutely continuous with respect to $\eta$, and
$+\infty$ otherwise; $d\nu/d\eta$ is the Radon--Nikodym derivative.

\begin{lemma}[Canonical regularizers and their difference]
\label{lem:canonical}
Suppose that $\aff K=\R^d$. For each $t>0$, define
\[
 \calR_t(z):=A_t^*(z)-\frac{\norm{z}^2}{2t},
 ~~ z\in\R^d.
\]
Then $\calR_t$ is proper and lower semicontinuous, and
\[
 \Prox_{t\calR_t}(y)=\{D_t(y)\}
 ~~\text{for every }y\in\R^d.
\]
Every proper function $R$ satisfying
$D_t(y)\in\Prox_{tR}(y)$ for all $y\in\R^d$ obeys
$R=\calR_t+c_t$ on $\Omega$, where $c_t\in\R$ is a constant.

For $0<s<t$, define $c_{s,t}:=1/(2s)-1/(2t)>0$.
At every $z\in\Omega$,
\begin{equation}
 \begin{aligned}
 \calR_s(z)-\calR_t(z)
 &=c_{s,t}V_s(z)+\KL(\nu_{s,z}\Vert\nu_{t,z})\\
 &=c_{s,t}V_t(z)-\KL(\nu_{t,z}\Vert\nu_{s,z}).
 \end{aligned}
 \label{eq:kl}
\end{equation}
Consequently,
\begin{equation}
 0<c_{s,t}V_s(z)
 \leq\calR_s(z)-\calR_t(z)
 \leq c_{s,t}V_t(z).
 \label{eq:sandwich}
\end{equation}
\end{lemma}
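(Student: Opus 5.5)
The plan splits into three parts: the proximal characterization of $D_t$, uniqueness of the regularizer on $\Omega$, and a direct computation for \eqref{eq:kl}.

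\emph{Step 1: $\Prox_{t\calR_t}=D_t$.} Expanding $\norm{x-y}^2$ in \eqref{eq:denoiser} gives $D_t(y)=\nabla A_t(y/t)$. Minimizing $\calR_t(z)+\norm{z-y}^2/(2t)$ is the same as minimizing $A_t^*(z)-\langle z,y/t\rangle$, since the $\norm{z}^2/(2t)$ terms cancel up to the constant $\norm{y}^2/(2t)$.
\begin{itemize}
\item Because $A_t$ is finite, smooth and strictly convex, $A_t^*$ is proper, convex and lower semicontinuous. Hence $\calR_t$ is proper and lower semicontinuous.
\item The minimizers of $A_t^*(z)-\langle z,\theta\rangle$ are exactly $\partial A_t(\theta)=\{\nabla A_t(\theta)\}$, by Fenchel--Young.
\item With $\theta=y/t$ this gives $\Prox_{t\calR_t}(y)=\{D_t(y)\}$.
\end{itemize}

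\emph{Step 2: uniqueness on $\Omega$.} Let $R$ be proper with $D_t(y)\in\Prox_{tR}(y)$ for all $y$. Set $g(z):=R(z)+\norm{z}^2/(2t)$ and $\psi(y):=\inf_z\{g(z)-\langle z,y\rangle/t\}$.
\begin{itemize}
\item The hypothesis says $\psi(y)=g(D_t(y))-\langle D_t(y),y\rangle/t$. Since $R$ is proper, $\psi$ is finite, and $-\psi(t\,\cdot)$ is the conjugate $g^*$, which is convex.
\item Evaluate at $\theta=y/t$. Since $D_t(t\theta)=\nabla A_t(\theta)$ is a selection of $\partial g^*(\theta)$, the convex function $g^*$ is differentiable with $\nabla g^*=\nabla A_t$. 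Thus $g^*=A_t+\text{const}$.
\item At $z=\nabla A_t(\theta)\in\Omega$ the Fenchel equality holds: $g(z)=\langle z,\theta\rangle-g^*(\theta)=A_t^*(z)-\text{const}$.
\item Since $\nabla A_t$ is onto $\Omega$, this yields $R=\calR_t+c_t$ on $\Omega$.
\end{itemize}
I expect this differentiability/selection argument for a possibly nonconvex $R$ to be the main obstacle. The fact to rely on is that a convex function on $\R^d$ admitting a continuous subgradient selection is $C^1$ with that gradient. This follows from upper semicontinuity of $\partial g^*$ together with continuity of the selection, or alternatively by integrating along segments.

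\emph{Step 3: the identities \eqref{eq:kl}.} On $\Omega$ the conjugate is attained, $A_t^*(z)=\theta_t(z)^\top z-A_t(\theta_t(z))$. The log-density of $\nu_{t,z}$ with respect to $\mu$ is $\ell_t(x)=\theta_t^\top x-\norm{x}^2/(2t)-A_t(\theta_t)$.
\begin{itemize}
\item Using $\E_{\nu_{t,z}}X=z$, we get $\E_{\nu_{t,z}}\ell_t=A_t^*(z)-\E_{\nu_{t,z}}\norm{X}^2/(2t)$.
\item Since $\E_{\nu_{t,z}}\norm{X}^2=V_t(z)+\norm{z}^2$, this becomes $\calR_t(z)=\E_{\nu_{t,z}}\ell_t+V_t(z)/(2t)$.
\item The same identity holds at $s$. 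The key point is that $\ell_s-\ell_t$ equals an affine function of $x$ plus $-c_{s,t}\norm{x}^2$.
\item Since both measures have mean $z$, $\E_{\nu_{s,z}}(\ell_s-\ell_t)$ and $\E_{\nu_{t,z}}(\ell_s-\ell_t)$ differ only through the second moments. The second moments differ by $V_s-V_t$.
\end{itemize}
Writing $\KL(\nu_{s,z}\Vert\nu_{t,z})=\E_{\nu_{s,z}}(\ell_s-\ell_t)$ and substituting, the $V/(2s)$ and $V/(2t)$ terms combine into $c_{s,t}V_s$, giving the first line of \eqref{eq:kl}. The symmetric computation with $\KL(\nu_{t,z}\Vert\nu_{s,z})=\E_{\nu_{t,z}}(\ell_t-\ell_s)$ gives the second line. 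Both measures are mutually equivalent to $\mu$, so the KL terms are finite.

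\emph{The sandwich \eqref{eq:sandwich}.} This follows from nonnegativity of KL. Strict positivity comes from $V_s(z)>0$: $\nu_{s,z}$ is equivalent to $\mu$, which is not a point mass.
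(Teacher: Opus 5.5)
Your proof is correct. Steps 1 and 3 coincide with the paper's proof: Fenchel equality gives the proximal identity, and substituting the tilted log-densities with the common mean $z$ gives \eqref{eq:kl} and the sandwich.

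Step 2 is also correct, but it runs on the dual side, whereas the paper stays in the mean coordinate.

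\emph{The paper's route.} With $G=R+\norm{\cdot}^2/(2t)$, compare global minimality at the inputs $t\theta_t(z)$ and $t\theta_t(w)$. This gives $\theta_t(z)^\top h\le G(w)-G(z)\le\theta_t(w)^\top h$ for $h=w-z$. Continuity of the inverse map $\theta_t$, which comes from the implicit function theorem in the mean parametrization, then makes $G$ differentiable on $\Omega$ with $\nabla G=\theta_t=\nabla A_t^*$. Integrating over the convex set $\Omega$ finishes the argument.

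\emph{Your route.} You pass to $g^*$, a finite convex function on all of $\R^d$. The hypothesis says exactly that $\nabla A_t(\theta)\in\partial g^*(\theta)$ for every $\theta$. Continuity of $\nabla A_t$, which is immediate from smoothness of $A_t$, replaces the paper's continuity of $\theta_t$, and gives $g^*=A_t+\mathrm{const}$ on $\R^d$. Equality in Fenchel--Young at the attained maximizer then transfers this back to $g=A_t^*-\mathrm{const}$ on the range of $\nabla A_t$.

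\emph{Comparison.} Your approach never uses the inverse map $\theta_t$. It also shows, as a byproduct, that the closed convex envelope $g^{**}$ equals $A_t^*$ up to a constant everywhere, which is the natural Gribonval--Nikolova framing. You still need surjectivity of $\nabla A_t$ onto $\Omega$ to state the conclusion on all of $\Omega$, just as the paper does.

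\emph{One justification to tighten.} Upper semicontinuity of $\partial g^*$ alone does not force $\partial g^*(\theta)$ to be a singleton, so do not rely on it. The clean argument is the two-sided bound
$0\le g^*(\theta+h)-g^*(\theta)-\langle\nabla A_t(\theta),h\rangle\le\langle\nabla A_t(\theta+h)-\nabla A_t(\theta),h\rangle$.
This follows from the subgradient inequalities at $\theta$ and $\theta+h$, and it is precisely the paper's squeeze written in the dual variable. Your alternative of integrating along segments also works.
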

\begin{proof}[Proof sketch]
For $R=\calR_t$, the proximal objective is
$A_t^*(z)-y^\top z/t+\norm y^2/(2t)$.
Fenchel equality \cite[Theorem~23.5]{rockafellar1970convex} gives the
unique minimizer $D_t(y)$.
Any other representation satisfies
$\nabla(R+\norm{\cdot}^2/(2t))=\theta_t=\nabla A_t^*$ on $\Omega$,
by comparing global minimizers at neighboring mean coordinates
\cite[Theorem~3(b)]{gribonval2020characterization}.
Thus it differs from $\calR_t$ by a constant.
Substituting the tilted densities in relative entropy and using their
common mean gives
$\KL(\nu_{s,z}\Vert\nu_{t,z})=\calR_s(z)-\calR_t(z)-c_{s,t}V_s(z)$.
Reversing the measures proves the other identity; nonnegative relative
entropy and positive variance give the bounds.
~\ref{sup:mainproofs} provides the full argument.
\end{proof}

\begin{theorem}[No common regularizer at two variances]
\label{thm:incompatibility}
If $\mu$ has compact support and is not a point mass, then for every
$0<s<t$ no proper lower semicontinuous $R$ satisfies
\begin{equation}
 D_s(y)\in\Prox_{sR}(y),~~ D_t(y)\in\Prox_{tR}(y)
 ~~\text{for all }y\in\R^d.
 \label{eq:common}
\end{equation}
\end{theorem}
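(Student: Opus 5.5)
The plan is to argue by contradiction, first in the full-dimensional case $\aff K=\R^d$, and then to reduce the general case to it. Suppose a proper lower semicontinuous $R$ satisfies \eqref{eq:common} for some $0<s<t$. Lemma~\ref{lem:canonical} applies at each variance separately. It gives constants $c_s,c_t\in\R$ with $R=\calR_s+c_s$ and $R=\calR_t+c_t$ on $\Omega$. Hence $\calR_s-\calR_t\equiv c_t-c_s$ is constant on $\Omega$.

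The goal is to contradict this constancy using the sandwich \eqref{eq:sandwich}. The upper bound there is $c_{s,t}V_t(z)$, and I will show that $V_t(z)\to0$ as $z$ approaches a suitable boundary point of $\Omega$. The lower bound $c_{s,t}V_s(z)>0$ is then useless as a uniform bound. So the strategy is to show that the difference $\calR_s-\calR_t$ is positive everywhere yet has infimum $0$, which rules out its being constant. It suffices to find a sequence $z_n\in\Omega$ with $V_t(z_n)\to0$.

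To build this sequence, take an extreme point $x_0$ of $\conv K$ that is exposed, so there is a direction $u$ such that $x_0$ is the unique maximizer of $\langle u,x\rangle$ over $K$. Such a point exists by compactness, for example the point of $K$ farthest from the origin, with $u=x_0$. Set $\theta_n:=n u$ and $z_n:=\nabla A_t(\theta_n)\in\Omega$, so that $\theta_t(z_n)=\theta_n$ and $\nu_{t,z_n}$ is the tilt of $\mu$ by $e^{n\langle u,x\rangle-\norm{x}^2/(2t)}$. The point $x_0$ lies in $\supp\mu$ and uniquely maximizes $\langle u,\cdot\rangle$ on the compact set $K$. A standard Laplace-type estimate then shows that $\nu_{t,z_n}$ concentrates at $x_0$: it gives mass tending to $1$ to every neighborhood of $x_0$, because the neighborhood has positive $\mu$-mass while its complement has exponentially smaller relative weight. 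Boundedness of $K$ then gives $z_n\to x_0$ and $V_t(z_n)=\E\norm{X-z_n}^2\to0$. Consequently $0<\calR_s(z_n)-\calR_t(z_n)\le c_{s,t}V_t(z_n)\to0$, so the constant $c_t-c_s$ is at most every positive number. But it must also be positive, since $\calR_s-\calR_t>0$ on $\Omega$. This is a contradiction.

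For general $K$ with $\aff K=x_1+L$, where $L$ is a linear subspace of dimension $k\ge1$ (so $k\ge1$ because $\mu$ is not a point mass), I would reduce to the full-dimensional case by translating and then using coordinates on $L$. Write $y=x_1+\ell+w$ with $\ell\in L$ and $w\in L^\perp$. Since $\norm{x-y}^2=\norm{x-x_1-\ell}^2+\norm{w}^2$ for $x\in K$, we get $D_t(y)=D_t(x_1+\ell)\in\aff K$. Now restrict \eqref{eq:common} to inputs $y\in\aff K$ and minimizers. For $x\notin\aff K$, the data term $\norm{x-y}^2$ equals $\norm{P x-y}^2+\norm{x-Px}^2$, where $P$ is the projection onto $\aff K$. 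The function $\tilde R:=R|_{\aff K}$ is then proper, because $R(D_t(y))<\infty$, and it is lower semicontinuous on $\aff K\cong\R^k$. Moreover $D_r(y)$ remains a global minimizer of $\tilde R+\norm{\cdot-y}^2/(2r)$ over $\aff K$ for $r=s,t$. So $\tilde R$ is a common regularizer for the pushed-forward prior on $\R^k$, whose affine hull is everything, and the first case applies.

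The main obstacle I expect is making the concentration step fully rigorous: identifying $\theta_t(z_n)$ correctly, and handling the case where $\mu$ puts no atom at $x_0$. In that case one needs $\mu(B(x_0,\delta))>0$, which holds since $x_0\in\supp\mu$, together with the uniform gap of $\langle u,x\rangle$ below its maximum outside small balls. A smaller point to verify is that Lemma~\ref{lem:canonical}'s uniqueness clause indeed needs only $D_t(y)\in\Prox_{tR}(y)$, which is exactly what \eqref{eq:common} provides.
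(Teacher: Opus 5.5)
Your proposal is correct and follows the paper's proof. It uses the same three steps: constancy of $\calR_s-\calR_t$ on $\Omega$ from the uniqueness clause of Lemma~\ref{lem:canonical}; the contradiction between strict positivity and the upper bound $c_{s,t}V_t\to0$ near the point of $K$ farthest from the origin; and the same affine-hull reduction.

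The only difference is how you get $V_t(z_n)\to0$. You use Laplace concentration of the tilts $\theta=nu$. The paper simply notes that $V_t(z)=\E_{\nu_{t,z}}\norm{X}^2-\norm{z}^2\le M^2-\norm{z}^2$ for every $z\in\Omega$, and lets $z$ run along the segment from an interior point to the farthest point $a$. This removes the concentration estimate you flagged as the main obstacle.
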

\begin{proof}
Suppose first that $\aff K=\R^d$.
By Lemma~\ref{lem:canonical}, a common regularizer would make
$\calR_s-\calR_t$ constant on $\Omega$.
This difference is strictly positive by Lemma~\ref{lem:canonical}.

Let $M:=\max_{x\in K}\norm{x}$, and choose $a\in K$ with
$\norm{a}=M$.
Fix $z_0\in\Omega$ and define
$z_\lambda:=(1-\lambda)z_0+\lambda a$ for $0\le\lambda<1$.
These points belong to $\Omega$
\cite[Theorem~6.1]{rockafellar1970convex}.
Since $\nu_{t,z_\lambda}$ is supported on $K$ and has mean
$z_\lambda$,
\[
 V_t(z_\lambda)
 =\E_{\nu_{t,z_\lambda}}\norm{X}^2-\norm{z_\lambda}^2
 \leq M^2-\norm{z_\lambda}^2
 \longrightarrow0
 ~~\text{as }\lambda\uparrow1.
\]
Applying Lemma~\ref{lem:canonical} yields
\begin{equation}
 0<\calR_s(z_\lambda)-\calR_t(z_\lambda)
 \leq c_{s,t}(M^2-\norm{z_\lambda}^2)
 \longrightarrow0.
 \label{eq:boundary}
\end{equation}
A strictly positive constant cannot have this limit, giving
the contradiction.

For a lower-dimensional support, let
$r:=\dim(\aff K)$ and write $\aff K=c+U\R^r$.
Here $c\in\aff K$ and the columns of
$U\in\R^{d\times r}$ form an orthonormal basis for the direction
space of $\aff K$, so $U^\top U=I_r$.
Let $\widetilde\mu$ be the distribution of $U^\top(X-c)$,
and let $\widetilde D_u$ be its posterior mean at noise variance
$u>0$. The Gaussian kernel gives
$D_u(c+Uy)=c+U\widetilde D_u(y)$ for $y\in\R^r$.
If a common $R$ existed, its restriction
$\widetilde R(z):=R(c+Uz)$ would satisfy
$\widetilde D_u(y)\in\Prox_{u\widetilde R}(y)$ for $y\in\R^r$ and $u\in\{s,t\}$,
by restricting the competing points in \eqref{eq:prox} to
$\aff K$.
The restriction is lower semicontinuous and is finite at the
posterior means, hence proper.
Since $\mu$ is not a point mass, $r\geq1$.
The induced prior has compact support with affine hull $\R^r$,
so the preceding argument applies.
\end{proof}

A point mass at $a$ admits the regularizer that is zero at $a$ and
$+\infty$ elsewhere. For $N(m,\Sigma)$ with mean $m$ and positive
definite covariance $\Sigma$, the quadratic
$R(z)=\tfrac12(z-m)^\top\Sigma^{-1}(z-m)$ works at every variance.
~\ref{sup:gaussian} covers singular Gaussians and characterizes
Gaussian priors by a common regularizer on a variance set with a
positive finite accumulation point.

\section{Quantitative consequences of incompatibility}
Fix $0<s<t$ and $\alpha:=1-s/t$.
For a common regularizer $R$, let $P_u(y)\in\Prox_{uR}(y)$ select a
global minimizer at every input, for $u\in\{s,t\}$.
The selections satisfy $P_s((s/t)y+\alpha P_t(y))=P_t(y)$:
a minimizer at the larger variance remains the unique minimizer at
the smaller variance after moving the observation toward it.
This compatibility relation underlies proximal scaling
\cite[Proposition~1]{nguyen2018learning} and the convex resolvent
identity \cite[Chapter~23]{bauschke2017convex}. Its violation for
the posterior means defines the residual
\begin{equation}
 C_{s,t}(y):=D_s((s/t)y+\alpha D_t(y))-D_t(y),
 \label{eq:residual}
\end{equation}
which requires one evaluation of each denoiser.

\begin{theorem}[Uniform approximation lower bound]
\label{thm:uniform}
Let $D_s$ be $L_s$-Lipschitz, so $\norm{D_s(x)-D_s(y)}\le L_s\norm{x-y}$ for all inputs. For any common proper lower
semicontinuous $R$ and selections $P_u(y)\in\Prox_{uR}(y)$ defined
for all $y\in\R^d$, $u\in\{s,t\}$, set
$\varepsilon_u:=\sup_y\norm{P_u(y)-D_u(y)}$. Then
\begin{equation}
 \norm{C_{s,t}(y)}\le\varepsilon_s+(1+\alpha L_s)\varepsilon_t,
 ~~
 \max\{\varepsilon_s,\varepsilon_t\}
 \ge\frac{\norm{C_{s,t}(y)}}{2+\alpha L_s}.
 \label{eq:uniform}
\end{equation}
For a compact prior other than a point mass, $C_{s,t}$ is not identically zero and
one may take $L_s=B^2/(4s)$, where
$B:=\operatorname{diam}K=\sup_{x,z\in K}\norm{x-z}$.
\end{theorem}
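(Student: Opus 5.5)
The plan has three parts. First, a triangle inequality built on the compatibility relation for the proximal selections gives both bounds in \eqref{eq:uniform}. Second, Theorem~\ref{thm:incompatibility} forces $C_{s,t}$ to be nonzero somewhere. Third, a covariance bound on the Jacobian of $D_s$ gives $L_s=B^2/(4s)$.

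\textbf{Step 1 (compatibility of selections).} Fix $y$ and put $x:=P_t(y)$ and $w:=(s/t)y+\alpha x$. We show that $x$ is the unique minimizer of $R(\cdot)+\norm{\cdot-w}^2/(2s)$, so $P_s(w)=x$ for any selection. Expand
\[
\frac{\norm{z-w}^2}{2s}=\frac{\norm{z-y}^2}{2t}+\frac{\alpha\norm{z-x}^2}{2s}+\text{const},
\]
with a constant independent of $z$. This holds because $w/s=y/t+\alpha x/s$ and $1/s=1/t+\alpha/s$. Hence
\[
R(z)+\frac{\norm{z-w}^2}{2s}=\Bigl[R(z)+\frac{\norm{z-y}^2}{2t}\Bigr]+\frac{\alpha}{2s}\norm{z-x}^2+\text{const}.
\]
The bracket is minimized at $x$, and the last term is uniquely minimized at $x$, so $x$ is the unique global minimizer.

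\textbf{Step 2 (the inequalities).} Write $w':=(s/t)y+\alpha D_t(y)$. Then
\[
C_{s,t}(y)=\bigl[D_s(w')-D_s(w)\bigr]+\bigl[D_s(w)-P_s(w)\bigr]+\bigl[P_t(y)-D_t(y)\bigr].
\]
The middle term is bounded by $\varepsilon_s$ and the last by $\varepsilon_t$. For the first, $\norm{w'-w}=\alpha\norm{D_t(y)-P_t(y)}\le\alpha\varepsilon_t$, so Lipschitz continuity bounds it by $\alpha L_s\varepsilon_t$. Adding gives the first inequality. Bounding $\varepsilon_s$ and $\varepsilon_t$ by their maximum gives the second.

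\textbf{Step 3 ($C_{s,t}\not\equiv0$).} Suppose $C_{s,t}\equiv0$. We want to manufacture a common regularizer and contradict Theorem~\ref{thm:incompatibility}. First reduce to $\aff K=\R^d$ as in that proof; $C_{s,t}$ transforms equivariantly under $y\mapsto c+Uy$. Take $R:=\calR_s$, so $D_s(y)=\Prox_{s\calR_s}(y)$ by Lemma~\ref{lem:canonical}. The key check is that $D_t(y)\in\Prox_{t\calR_s}(y)$. By Step 1's expansion in reverse, the $t$-objective at $y$ equals
\[
\calR_s(z)+\frac{\norm{z-w'}^2}{2s}-\frac{\alpha}{2s}\norm{z-D_t(y)}^2+\text{const},
\]
with $w'$ built from $x=D_t(y)$. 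Since $C_{s,t}\equiv0$, $D_s(w')=D_t(y)$. But the subtracted quadratic makes this comparison not immediate, so a different route is needed.

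The alternative goes through the gradient characterization in the proof of Lemma~\ref{lem:canonical}. Let $z=D_t(y)\in\Omega$. Then $y=t\theta_t(z)$ and $w'=(s/t)y+\alpha z=s\theta_t(z)+\alpha z$. The condition $D_s(w')=z$ means $w'=s\theta_s(z)$. So $C_{s,t}\equiv0$ gives $\theta_s(z)-\theta_t(z)=(\alpha/s)z=c_{s,t}\cdot 2\alpha z/(2s)\cdot$\ldots; in short, $\nabla(\calR_s-\calR_t)=0$ on $\Omega$, using $\nabla\calR_u=\theta_u-z/u$. Indeed $\alpha/s=1/s-1/t$, so the identity is exactly $\theta_s-z/s=\theta_t-z/t$. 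Hence $\calR_s-\calR_t$ is constant on the connected set $\Omega$. The boundary argument \eqref{eq:boundary} in the proof of Theorem~\ref{thm:incompatibility} then gives the contradiction. This reuse of the boundary argument is the main step.

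\textbf{Step 4 (Lipschitz constant).} The Jacobian satisfies $\nabla D_s(y)=s^{-1}\Cov(X\mid Y_s=y)$. This covariance is that of a law supported on $K$, so its operator norm is at most the maximal variance of $\langle v,X\rangle$ over unit vectors $v$. That variance is at most $(\text{width})^2/4\le B^2/4$ by Popoviciu's inequality. The mean value inequality then gives $L_s=B^2/(4s)$.
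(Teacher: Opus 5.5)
Your proof is correct and follows the paper's argument essentially step for step. You use the same square expansion to get $P_s((s/t)y+\alpha P_t(y))=P_t(y)$ and the same three-term triangle inequality. You reduce $C_{s,t}\equiv0$ to $\nabla\calR_s=\nabla\calR_t$ on $\Omega$ via $y=t\theta_t(z)$ and injectivity of $D_s$, then close with the boundary contradiction \eqref{eq:boundary}, and you bound $L_s$ by the same covariance--Popoviciu argument. In a write-up, drop the abandoned attempt with $R:=\calR_s$ and the garbled intermediate expression in Step~3, and state explicitly that $z=D_t(y)$ ranges over all of $\Omega$ because $\nabla A_t$ is surjective.
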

\begin{proof}[Proof sketch]
Set $z:=P_t(y)$, $w:=(s/t)y+\alpha z$, and
$Q_u(x;q):=R(x)+\norm{x-q}^2/(2u)$. Expansion gives
\begin{equation}
 Q_s(x;w)-Q_s(z;w)
 =Q_t(x;y)-Q_t(z;y)+c_{s,t}\norm{x-z}^2.
 \label{eq:square}
\end{equation}
Thus $z$ is the unique global minimizer at $(s,w)$, proving
compatibility even for nonconvex $R$.
For $w_P:=(s/t)y+\alpha P_t(y)$, the triangle inequality and
Lipschitz continuity give
\begin{equation}
 \norm{C_{s,t}(y)}\le\norm{D_s(w_P)-P_s(w_P)}
 +(1+\alpha L_s)\norm{P_t(y)-D_t(y)},
 \label{eq:local}
\end{equation}
which proves \eqref{eq:uniform}.
If $C_{s,t}\equiv0$ and $\aff K=\R^d$, take $y=t\theta_t(z)$.
Injectivity of $D_s$ gives $s\theta_s(z)=s\theta_t(z)+\alpha z$,
so $\nabla\calR_s(z)=\nabla\calR_t(z)$, contradicting
\eqref{eq:boundary}. Lower-dimensional supports reduce to their
affine hull as above.
Finally, the Jacobian satisfies
$JD_s(y)=\Cov(X\mid Y_s=y)/s$, where $\Cov$ denotes covariance.
Each unit projection of $X$ has range of length at most $B$,
so its posterior variance is at most $B^2/4$
\cite{bhatia2000variance}. This bounds the Jacobian operator norm
and gives $L_s=B^2/(4s)$.~\ref{sup:mainproofs} supplies the detailed proof.
\end{proof}

For measurable $P_u$, define the excess mean squared error
$\calE_u(P_u):=\E\norm{P_u(Y_u)-X}^2-\E\norm{D_u(Y_u)-X}^2$.
The conditional expectation projection identity
\cite[Section~9.4]{williams1991probability} gives
$\calE_u(P_u)=\E\norm{P_u(Y_u)-D_u(Y_u)}^2$.
A nonzero residual also forces positive excess risk at at least one
variance. The uniform bound alone locates an error at an individual
observation. Monotonicity of proximal selections extends that error
to a nearby set of positive probability.

\begin{corollary}[Positive excess risk]
\label{cor:risk}
Let $K$ contain more than one point and satisfy $K\subseteq\{x:\norm x\le M\}$ for some $M>0$.
Choose $y_0$ with $\delta:=\norm{C_{s,t}(y_0)}>0$ and define
\begin{equation}
 L:=\frac{B^2}{4s},~~ a:=\frac{\delta}{2+\alpha L},~~
 b:=\max\{\norm{y_0},\norm{(s/t)y_0+\alpha D_t(y_0)}+\alpha a\}.
 \label{eq:riskconstants}
\end{equation}
Here $L$ bounds the Lipschitz constants of both denoisers, while $a$
and $b$ bound the error size and the norm of an input where it occurs.
For measurable selections from any common $R$ as in
Theorem~\ref{thm:uniform},
\begin{equation}
 \max\{\calE_s(P_s),\calE_t(P_t)\}
 \ge\frac{\omega_d a^{d+2}}{16^{d+1}L^d(2\pi t)^{d/2}}
 \exp\!\left[-\frac{(b+M+a/(4L))^2}{2s}\right]>0,
 \label{eq:riskbound}
\end{equation}
where $\omega_d$ is the volume of the Euclidean unit ball.
\end{corollary}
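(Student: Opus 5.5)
The plan is to first locate, using the pointwise inequality \eqref{eq:local} from the proof of Theorem~\ref{thm:uniform}, a single input of controlled norm at which one selection misses the posterior mean by at least $a$. Monotonicity of proximal selections then spreads this error to a small ball of inputs, and a lower bound on the Gaussian marginal density turns it into excess risk through $\calE_u(P_u)=\E\norm{P_u(Y_u)-D_u(Y_u)}^2$.

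\emph{Step 1 (a bad point).} Let $w_P:=(s/t)y_0+\alpha P_t(y_0)$. By \eqref{eq:local} with $L_s\le L$,
\[
\delta\le\norm{D_s(w_P)-P_s(w_P)}+(1+\alpha L)\norm{P_t(y_0)-D_t(y_0)}.
\]
So either $\norm{P_t(y_0)-D_t(y_0)}\ge a$, or else $\norm{D_s(w_P)-P_s(w_P)}\ge a$ with $\norm{P_t(y_0)-D_t(y_0)}<a$. If both terms were below $a$, the right side would be below $(2+\alpha L)a=\delta$, which is impossible.
\begin{itemize}
\item In the first case take $u=t$ and $y^*=y_0$.
\item In the second case take $u=s$ and $y^*=w_P$. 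Then $\norm{w_P}\le\norm{(s/t)y_0+\alpha D_t(y_0)}+\alpha a$.
\end{itemize}
Either way $\norm{y^*}\le b$ and $\norm{v}\ge a$, where $v:=P_u(y^*)-D_u(y^*)$.

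\emph{Step 2 (monotonicity and spreading).} For any proper $R$ and selections of $\Prox_{uR}$, I will show
\[
\langle P_u(y)-P_u(y'),\,y-y'\rangle\ge0 .
\]
To do so, write the two global-minimality inequalities $Q_u(P_u(y);y)\le Q_u(P_u(y');y)$ and $Q_u(P_u(y');y')\le Q_u(P_u(y);y')$ and add them. The $R$ terms and the squared norms cancel, leaving exactly this inequality. No convexity is needed, and this is the step that replaces convex-analytic tools.

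Both denoisers are $L$-Lipschitz, since $B^2/(4t)\le B^2/(4s)=L$ by the Jacobian bound of Theorem~\ref{thm:uniform}. Put $e:=v/\norm v$ and $h:=y-y^*$. Then
\[
\langle P_u(y)-D_u(y),h\rangle\ge\langle P_u(y^*)-D_u(y),h\rangle\ge\langle v,h\rangle-L\norm h^2 .
\]
Restrict $h$ to a ball centred at $\rho e$ with radius a fixed fraction of $\rho$, and choose $\rho$ proportional to $a/L$. Then $\langle v,h\rangle\gtrsim a\rho$ while $L\norm h^2$ is at most half of that, and dividing by $\norm h\lesssim\rho$ gives $\norm{P_u(y)-D_u(y)}\ge c\,a$ on the ball. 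Tuning the fractions should reproduce the constants $a/(4L)$ and the powers of $16$ in \eqref{eq:riskbound}; for example, a ball of radius of order $a/(16L)$ whose points have norm at most $b+a/(4L)$.

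\emph{Step 3 (density bound).} The density of $Y_u$ is $p_u(y)=\E[(2\pi u)^{-d/2}e^{-\norm{y-X}^2/(2u)}]$. Since $\norm X\le M$ and $s\le u\le t$,
\[
p_u(y)\ge(2\pi t)^{-d/2}\exp[-(\norm y+M)^2/(2s)] .
\]
Multiplying $(ca)^2$ by this density and by the ball volume $\omega_d(\text{radius})^d$ gives a bound of the form \eqref{eq:riskbound}. Positivity follows from $\delta>0$, which Theorem~\ref{thm:uniform} guarantees is attainable.

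\emph{Main obstacle.} The main difficulty is Step 2: getting a uniform error lower bound on a whole ball from a single bad point when the selections may be discontinuous. My plan is the monotonicity inequality in the shifted ball; the remaining work is only tracking constants.
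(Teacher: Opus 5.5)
Your proposal is correct and follows the paper's proof essentially step for step. It uses the same case split via \eqref{eq:local} to find an input of norm at most $b$ with error at least $a$, the same monotonicity of global proximal selections (obtained by adding the two minimality inequalities), the same spreading of that error by the Lipschitz bound $L$, and the same Gaussian density lower bound. The tuning you leave open works as the paper does it: take $r=a/(4L)$ and the ball $\mathbb B(y^*+3re/4,r/4)$, which is your ball with $\rho=3r/4$ and radius $\rho/3$. On that ball the error is at least $a/4$, which gives exactly the constants in \eqref{eq:riskbound}.
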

\begin{proof}[Proof sketch]
Global minimality gives $\langle P_u(x)-P_u(y),x-y\rangle\ge0$,
so each selection is monotone. By \eqref{eq:local}, an error of at
least $a$ occurs at an input of norm at most $b$. Monotonicity and
Lipschitz continuity of the denoiser give an error of at least $a/4$
on a nearby ball of radius $a/(16L)$. Integrating over that ball,
using a lower bound on the Gaussian observation density, yields
\eqref{eq:riskbound}.~\ref{sup:riskproof} gives the full proof, and
~\ref{sup:approximation} accounts for approximate
denoiser evaluations.
\end{proof}

\section{Numerical illustrations}
\label{sec:numerics}

The experiments illustrate the variation of canonical regularizers,
the approximation lower bound obtained from the residual, and the
excess risk of using one regularizer at two variances.
We consider four compact priors: equal mass at $\{-1,1\}$;
masses $(0.12,0.38,0.29,0.21)$ at $(-1.2,-0.1,0.6,1.8)$;
masses $(0.2,0.5,0.3)$ at
$\{(-1,-0.4),\allowbreak(1.1,-0.3),\allowbreak(0.2,1.3)\}$;
and the uniform distribution on $[-1,1]$.
The standard Gaussian provides a control for which a common
quadratic regularizer exists.
Discrete posteriors are evaluated by finite sums and the uniform
posterior by numerical quadrature.
~\ref{sup:numerics} gives the evaluation procedures,
accuracy checks, and full numerical tables.

\begin{figure}[ht]
\centering
\includegraphics[width=0.8\linewidth]{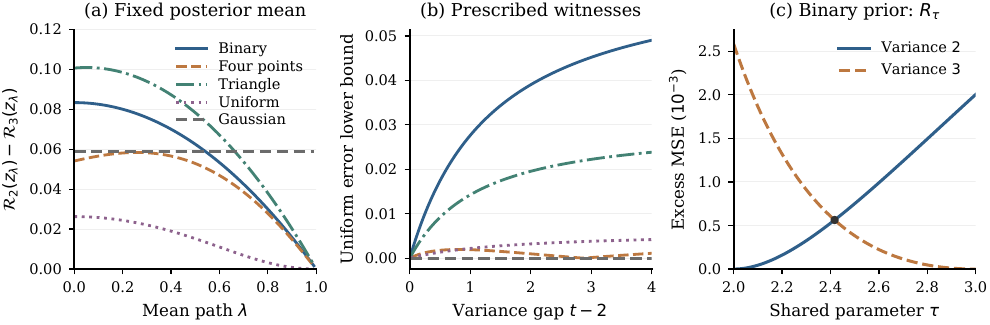}
\caption{Regularizer variation and reuse across noise levels.
(a) Canonical regularizer differences evaluated at common posterior
means, with $s=2$ and $t=3$.
(b) Lower bounds on uniform approximation error at prescribed
observations, with $s=2$ and varying $t$.
(c) Excess mean squared errors for the binary regularizer family
$R_\tau$ at variances $2$ and $3$; the dot marks approximately
equal excess risks.}
\label{fig:experiments}
\end{figure}

Figure~\ref{fig:experiments}(a) fixes $s=2$ and $t=3$.
For each compact prior, we evaluate
$\calR_s(z_\lambda)-\calR_t(z_\lambda)$ along
$z_\lambda:=(1-\lambda)\E X+\lambda a$
at $100$ values of $\lambda$ in $[0,0.99]$, where
$a\in K$ maximizes the Euclidean norm on the support $K$.
Both regularizers are evaluated at the same posterior mean
$z_\lambda$.
The computed differences are positive along these paths and
become small near the boundary, consistent with
Lemma~\ref{lem:canonical} and the boundary argument in
Theorem~\ref{thm:incompatibility}.
The Gaussian control follows $z_\lambda=\lambda$ and has
the constant difference $\tfrac12\log(9/8)$.
This constant reflects an additive normalization and does not
affect the minimizers.

Panel~(b) fixes $s=2$, varies $t$ over $[2.02,6]$, and evaluates
the lower bound
$\norm{C_{s,t}(y)}/(2+\alpha L_s)$ from
Theorem~\ref{thm:uniform}.
Here $\alpha=1-s/t$, and
$L_s=(\operatorname{diam}K)^2/(4s)$ for each compact prior.
We use the prescribed observation
$y=t\operatorname{arctanh}(1/2)v_d$, with $v_d:=(1,\ldots,1)^\top/\sqrt d$,
where $d$ is the signal dimension, $v_d$ is a unit vector,
and $\operatorname{arctanh}$ is the inverse hyperbolic tangent.
For the binary prior, this choice gives $D_t(y)=1/2$.
At $t=3$, the resulting lower bounds are
$0.027631$, $0.001910$, $0.014250$, and $0.002192$
for the binary, four point, triangular, and uniform priors,
respectively.
The uniform-prior value is a quadrature approximation.
These positive values quantify the obstruction to uniform
approximation by proximal selections of a common regularizer.
The Gaussian residual is zero analytically and remains at
rounding scale numerically.

For the binary prior, panel~(c) examines $R_\tau(z):=h(z)-z^2/(2\tau)$
for $|z|\le1$ and $R_\tau(z):=+\infty$ otherwise, with $\tau\in[2,3]$.
Here $h(z):=((1+z)\log(1+z)+(1-z)\log(1-z))/2$ on $[-1,1]$,
with $0\log0=0$.
Each $R_\tau$ is convex and reproduces the posterior mean
at variance $\tau$.
For $u\in\{2,3\}$, let $P_{u,\tau}(y)$ be the unique minimizer
of $R_\tau(z)+(z-y)^2/(2u)$.
The panel plots the excess risks $\calE_u(P_{u,\tau})$ as
$\tau$ varies.

The regularizer matched to variance $2$ incurs excess risk
$2.581\times10^{-3}$ at variance $3$; the one matched to
variance $3$ incurs $2.003\times10^{-3}$ at variance $2$.
At $\tau\approx2.416996$, the two excess risks are approximately
equal to $5.628\times10^{-4}$.
For the same variance pair and the binary observation used in
panel~(b), Corollary~\ref{cor:risk} gives a lower bound of
$1.292\times10^{-8}$ on the larger excess risk of any pair of
proximal selections generated by a common regularizer.
The computed risks in this family are substantially larger
than this bound.

\bibliographystyle{elsarticle-num}
\bibliography{references}

\clearpage
\appendix
\numberwithin{theorem}{section}

\section{Notation and the range of the posterior mean}
\label{sup:notation}
Let $\mu$ be a probability measure on $\R^d$, let $X\sim\mu$, and let
$Z\sim N(0,I_d)$ be independent of $X$. At variance $t>0$, the observation
$Y_t=X+\sqrt t Z$ has density
\begin{equation}
 p_t(y)=(2\pi t)^{-d/2}\int_{\R^d}
 e^{-\norm{x-y}^2/(2t)}\,\mu(dx).
 \label{sup:density}
\end{equation}
The associated denoiser is
\begin{equation}
 D_t(y)=\frac{\int_{\R^d}x e^{-\norm{x-y}^2/(2t)}\,\mu(dx)}
 {\int_{\R^d}e^{-\norm{x-y}^2/(2t)}\,\mu(dx)}.
 \label{sup:denoiser}
\end{equation}
The matrix $I_d$ is the $d\times d$ identity matrix. The symbol $\interior$ denotes ambient interior. The norm and inner product are Euclidean, and
$\mathbb B(q,r)=\{x:\norm{x-q}\leq r\}$ denotes a closed ball. For a set $C$, $\conv C$
and $\aff C$ denote its convex and affine hulls, and $\iota_C$ is
zero on $C$ and $+\infty$ elsewhere. We write $\Cov$ for covariance,
$\Var$ for scalar variance, and $\tr$ for matrix trace.
The denominator is positive, and Gaussian weighting makes the numerator
finite at every $(t,y)$. If $X$ is integrable, this formula specifies a
version of $\E[X\mid Y_t=y]$; conditional expectation is understood as
in \cite[Chapter~9]{williams1991probability}. Formula
\eqref{sup:denoiser} also defines the denoiser for general probability
measures.

For a proper lower semicontinuous function
$R:\R^d\to\R\cup\{+\infty\}$, define
\begin{equation}
 \Prox_{tR}(y)=\mathop{\rm argmin}_{x\in\R^d}
 \left\{R(x)+\frac{\norm{x-y}^2}{2t}\right\}.
 \label{sup:prox}
\end{equation}
A selection $P_t(y)\in\Prox_{tR}(y)$ therefore chooses a global minimizer.
We call $R$ common to a collection of denoisers when this same function
satisfies $D_t(y)\in\Prox_{tR}(y)$ at every specified variance and input.

For the rest of this section and Section~\ref{sup:variation}, assume that
$K=\supp\mu$ is compact and $\aff K=\R^d$. Put
\begin{equation}
 \begin{split}
 A_t(\theta)&=\log\int_K
 e^{\theta^\top x-\norm{x}^2/(2t)}\,\mu(dx),~~
 \Omega=\interior\conv K,\\
 \widetilde\nu_{t,\theta}(dx)&=
 e^{\theta^\top x-\norm{x}^2/(2t)-A_t(\theta)}\,\mu(dx).
 \end{split}
 \label{sup:partition}
\end{equation}
Differentiation under the integral gives the usual exponential family
identities \cite{wainwright2008graphical}:
\begin{equation}
 \nabla A_t(\theta)=\E_{\widetilde\nu_{t,\theta}}X,~~
 \nabla^2 A_t(\theta)=\Cov_{\widetilde\nu_{t,\theta}}(X)\succ0,
 ~~ D_t(y)=\nabla A_t(y/t).
 \label{sup:moments}
\end{equation}
The strict inequality follows because the tilt has a positive density
with respect to $\mu$ and hence the same support. A zero variance in a
nonzero direction would place $K$ in a proper affine hyperplane.

\begin{proposition}[Mean parametrization]
\label{sup:meanrange}
The map $\nabla A_t$ is a smooth bijection from $\R^d$ onto $\Omega$.
Its inverse $\theta_t(z):=(\nabla A_t)^{-1}(z)$ is smooth jointly in $(t,z)\in(0,\infty)\times\Omega$.
\end{proposition}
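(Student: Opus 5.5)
The proof has four parts: smoothness and strict convexity of $A_t$, the claim that the range of $\nabla A_t$ is all of $\Omega$, injectivity, and joint smoothness of the inverse via the implicit function theorem.

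\textbf{Smoothness and convexity.} Since $K$ is compact, the integrand $e^{\theta^\top x-\norm{x}^2/(2t)}$ and all its derivatives in $(t,\theta)$ are bounded on $K$, locally uniformly in $(t,\theta)\in(0,\infty)\times\R^d$. Differentiation under the integral is therefore justified. This makes $A_t(\theta)$ jointly $C^\infty$ and gives \eqref{sup:moments}.

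Positive definiteness of the Hessian makes $A_t$ strictly convex, so $\nabla A_t$ is injective. Indeed, for $\theta\neq\theta'$, strict monotonicity gives
\[
\langle\nabla A_t(\theta)-\nabla A_t(\theta'),\theta-\theta'\rangle>0 .
\]

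\textbf{The range lies in $\Omega$.} The mean $\nabla A_t(\theta)$ of $\widetilde\nu_{t,\theta}$ lies in $\conv K$. To show it is interior, suppose it lay on the boundary. Then some supporting hyperplane would satisfy $\langle v,x\rangle\le\langle v,\nabla A_t(\theta)\rangle$ on $K$, with equality in mean. This forces $\widetilde\nu_{t,\theta}$, and hence $\mu$ (same support), onto a hyperplane, which contradicts $\aff K=\R^d$.

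\textbf{Surjectivity onto $\Omega$ (the main step).} Fix $z\in\Omega$ and consider
\[
f(\theta):=A_t(\theta)-\theta^\top z .
\]
This function is convex and smooth. I will show it is coercive, so that it attains a minimum, where $\nabla A_t(\theta)=z$.

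For coercivity, take a unit vector $v$. Since $z$ is interior, there is $\varepsilon>0$ with $z+\varepsilon v\in\conv K$ for every unit $v$. Hence
\[
h_K(v):=\max_{x\in K}\langle v,x\rangle\ge\langle v,z\rangle+\varepsilon .
\]
Now bound $A_t(rv)$ below. On the set $S_v:=\{x\in K:\langle v,x\rangle>h_K(v)-\varepsilon/2\}$, which has positive $\mu$-mass because it is relatively open in the support, we get
\[
A_t(rv)\ge r\bigl(h_K(v)-\varepsilon/2\bigr)-\frac{M^2}{2t}+\log\mu(S_v).
\]
It follows that $f(rv)\ge r\varepsilon/2-C_v$.

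To make this uniform in $v$, I would use compactness of the unit sphere. Choose finitely many directions $v_i$ and sets defined with margin $\varepsilon/4$, so that each $v$ near $v_i$ still satisfies $\langle v,x\rangle>h_K(v)-\varepsilon/2$ on the set for $v_i$. The masses then stay bounded below, which gives $f(\theta)\ge\norm\theta\varepsilon/4-C$.

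Alternatively, for a convex function, coercivity along every ray already implies bounded sublevel sets. This is the cleaner route: a convex function with no direction of recession has compact level sets. So the directional bound $f(rv)\to\infty$ for every $v$ suffices, and the uniformity argument above can be skipped. With compact level sets, a minimizer exists and $\nabla A_t(\theta)=z$.

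\textbf{Smoothness of the inverse.} Apply the implicit function theorem to
\[
F(t,z,\theta):=\nabla A_t(\theta)-z=0 .
\]
$F$ is $C^\infty$ in all variables, and $\partial_\theta F=\nabla^2A_t(\theta)$ is invertible. Thus $\theta_t(z)$ is locally, hence globally by uniqueness, a smooth function of $(t,z)$ on $(0,\infty)\times\Omega$.
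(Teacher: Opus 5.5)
Your proposal is correct and follows the paper's proof essentially step for step: a supporting hyperplane keeps the range inside $\Omega$, strict convexity gives injectivity, coercivity of $A_t(\theta)-\theta^\top z$ gives surjectivity, and the implicit function theorem gives joint smoothness of the inverse. Your one variation is sound and slightly shortens the coercivity step: you replace the paper's finite covering of the unit sphere with the fact that a finite convex function on $\R^d$ tending to infinity along every ray from a point has bounded sublevel sets.
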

\begin{proof}
The posterior mean belongs to $\conv K$. If it lay on its boundary, a
supporting hyperplane would give a nonnegative affine function of $X$
with zero posterior expectation. That function would vanish on the
whole support, contradicting $\aff K=\R^d$. Thus the range is contained
in $\Omega$. The positive definite Hessian makes $A_t$ strictly convex,
so its gradient is injective.

Fix $z\in\Omega$ and choose $\eta>0$ such that
$\mathbb B(z,3\eta)\subset\conv K$. For every unit vector $v$, there is a point
$x_v\in K$ with $v^\top(x_v-z)>2\eta$. Choose a neighborhood $U_v$ of
$v$ on the unit sphere and a neighborhood $N_v$ of $x_v$ such that
\[
 u^\top(x-z)\ge\eta~~(u\in U_v,\ x\in N_v\cap K).
\]
Each $N_v$ has positive $\mu$ measure. A finite collection
$U_{v_1},\ldots,U_{v_m}$ covers the unit sphere, and
\[
 c_t:=\min_{1\le j\le m}\int_{N_{v_j}}
 e^{-\norm{x}^2/(2t)}\,\mu(dx)>0.
\]
For every unit vector $u$ and $r\ge0$, selecting a covering neighborhood
gives
\begin{equation}
 A_t(ru)-ru^\top z\ge r\eta+\log c_t.
 \label{sup:coercivity}
\end{equation}
Consequently $A_t(\theta)-\theta^\top z$ is coercive. Its unique minimizer
$\theta_t(z)$ satisfies $\nabla A_t(\theta_t(z))=z$, which proves
surjectivity. The Hessian in \eqref{sup:moments} is invertible, so the
implicit function theorem \cite[Theorem~9.28]{rudin1976principles} gives a continuously differentiable inverse;
iteration of its derivative formula gives the asserted smoothness.
\end{proof}

Define
\begin{equation}
 \nu_{t,z}:=\widetilde\nu_{t,\theta_t(z)},~~
 V_t(z)=\E_{\nu_{t,z}}\norm{X-z}^2,~~
 \calR_t(z)=A_t^*(z)-\frac{\norm{z}^2}{2t},
 \label{sup:canonical}
\end{equation}
where $A_t^*(z)=\sup_\theta\{\theta^\top z-A_t(\theta)\}$ is
the convex conjugate and the final expression is defined
on all of $\R^d$ as an extended real function. This is the canonical proximal representation
\cite{gribonval2011should,gribonval2020characterization}.
The conjugate is proper and lower semicontinuous. Subtracting the
continuous quadratic preserves these properties. Fenchel equality \cite{rockafellar1970convex} gives
\begin{equation}
 \calR_t(z)+\frac{\norm{z-y}^2}{2t}
 =A_t^*(z)-\frac{y^\top z}{t}+\frac{\norm y^2}{2t},
 ~~ \Prox_{t\calR_t}(y)=\{D_t(y)\}.
 \label{sup:canonicalprox}
\end{equation}
On $\Omega$ the canonical penalty is smooth, with
\begin{equation}
 \nabla\calR_t(z)=\theta_t(z)-z/t,~~
 \nabla^2\calR_t(z)=\Cov_{\nu_{t,z}}(X)^{-1}-I_d/t.
 \label{sup:canonicalderivatives}
\end{equation}

If $r:=\dim(\aff K)\ge1$ and $\aff K=c+U\R^r$, where $c\in\aff K$ and $U\in\R^{d\times r}$ has orthonormal columns, the
same construction applies to the law of $U^\top(X-c)$, whose denoiser is denoted by $\widetilde D_t$. In particular,
$D_t(c+Uy)=c+U\widetilde D_t(y)$, and its range is the relative interior
of $\conv K$. The preceding proposition applies in these intrinsic
coordinates. For a singleton support, $D_t$ is constant.

\section{How the canonical penalty changes with the variance}
\label{sup:variation}
The canonical penalty has a constrained entropy representation that
compares posterior distributions with a prescribed mean. Here $\nu\ll\mu$ means absolute continuity and $d\nu/d\mu$ is the Radon--Nikodym derivative. Relative entropy is
$\KL(\nu\Vert\mu)=\int\log(d\nu/d\mu)\,d\nu$ for
$\nu\ll\mu$, and $+\infty$ otherwise.

\begin{proposition}[Entropy representation and variance derivative]
\label{sup:variance}
Under the compact support assumptions of Section~\ref{sup:notation},
for every $z\in\Omega$ and $t>0$,
\begin{equation}
 \calR_t(z)=
 \min_{\substack{\nu\ll\mu\\\E_\nu X=z}}
 \left\{\KL(\nu\Vert\mu)
       +\frac{1}{2t}\E_\nu\norm{X-z}^2\right\}.
 \label{sup:entropy}
\end{equation}
The minimizer is uniquely $\nu_{t,z}$. At a fixed mean $z$,
\begin{equation}
 \partial_t\calR_t(z)=-\frac{V_t(z)}{2t^2},~~
 \calR_s(z)-\calR_t(z)=\int_s^t\frac{V_u(z)}{2u^2}\,du
 ~(0<s<t).
 \label{sup:timeidentity}
\end{equation}
\end{proposition}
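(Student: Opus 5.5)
The proof has two parts: first the entropy representation by a Gibbs variational (Donsker--Varadhan) argument, then the variance derivative by an envelope argument.

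\textbf{Entropy representation.} Fix $z\in\Omega$ and $t>0$, and write $\theta=\theta_t(z)$. Take any $\nu\ll\mu$ with $\E_\nu X=z$ and $\KL(\nu\Vert\mu)<\infty$. By \eqref{sup:partition}, $d\nu_{t,z}/d\mu=e^{\theta^\top x-\norm{x}^2/(2t)-A_t(\theta)}$, so
\[
\KL(\nu\Vert\mu)=\KL(\nu\Vert\nu_{t,z})+\E_\nu\bigl[\theta^\top X-\norm{X}^2/(2t)\bigr]-A_t(\theta).
\]
The mean constraint gives $\E_\nu\norm{X}^2=\E_\nu\norm{X-z}^2+\norm z^2$. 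Adding $\frac1{2t}\E_\nu\norm{X-z}^2$ to both sides then yields
\[
\KL(\nu\Vert\mu)+\frac{1}{2t}\E_\nu\norm{X-z}^2
=\KL(\nu\Vert\nu_{t,z})+\theta^\top z-A_t(\theta)-\frac{\norm z^2}{2t}.
\]
Since $\nabla A_t(\theta)=z$ and $A_t$ is convex, $\theta^\top z-A_t(\theta)=A_t^*(z)$. Hence the objective equals $\calR_t(z)+\KL(\nu\Vert\nu_{t,z})$. It is therefore at least $\calR_t(z)$, with equality exactly when $\nu=\nu_{t,z}$, because relative entropy vanishes only for equal measures. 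The tilt $\nu_{t,z}$ is admissible: it is absolutely continuous with respect to $\mu$, and its mean is $z$ by construction. Its relative entropy is finite because $K$ is compact, so the density is bounded. This proves \eqref{sup:entropy} and the uniqueness of the minimizer. Minimizers with infinite relative entropy are excluded trivially.

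\textbf{Variance derivative.} Write $\calR_t(z)=\theta_t(z)^\top z-A_t(\theta_t(z))-\norm z^2/(2t)$. By Proposition~\ref{sup:meanrange}, $\theta_t(z)$ is jointly smooth in $(t,z)$. $A_t(\theta)$ is smooth in $(t,\theta)$, by differentiation under the integral with a compact support. Differentiating in $t$ at fixed $z$, the terms containing $\partial_t\theta_t$ cancel because $z-\nabla A_t(\theta_t)=0$; this is the envelope step. The explicit $t$-derivative of $A_t$ is $\E_{\nu_{t,z}}\norm X^2/(2t^2)$. Hence
\[
\partial_t\calR_t(z)=-\frac{\E_{\nu_{t,z}}\norm X^2}{2t^2}+\frac{\norm z^2}{2t^2}=-\frac{V_t(z)}{2t^2}.
\]
The identity for $\calR_s(z)-\calR_t(z)$ follows from the fundamental theorem of calculus on $[s,t]$. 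The integrand $u\mapsto V_u(z)$ is continuous because $\theta_u(z)$ is, so the integral is well defined.

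\textbf{Main obstacle.} The only delicate point is justifying the smoothness in $t$ jointly with $\theta$, so that the chain rule applies. This comes from dominated differentiation on compact $K$: all integrands $x^k e^{\theta^\top x-\norm x^2/(2t)}$ are uniformly bounded for $(t,\theta)$ in compact subsets of $(0,\infty)\times\R^d$. Joint smoothness of $\theta_t(z)$ then follows from the implicit function theorem as in Proposition~\ref{sup:meanrange}.
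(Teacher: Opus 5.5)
Your proof is correct and follows the paper's argument essentially step for step. You rewrite the objective as $\calR_t(z)+\KL(\nu\Vert\nu_{t,z})$ using the tilted density and the mean constraint, and you obtain the variance derivative by differentiating $\theta_t(z)^\top z-A_t(\theta_t(z))$ with the $\partial_t\theta_t$ terms cancelling. The only difference is cosmetic: you exclude competitors with infinite relative entropy separately, whereas the paper notes that the identity persists in that case because the log-density of the tilt is bounded on $K$.
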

\begin{proof}
For every feasible $\nu$, substitution of its common mean $z$ into
\eqref{sup:partition} yields
\begin{align}
 \KL(\nu\Vert\nu_{t,z})
 &=\KL(\nu\Vert\mu)-\theta_t(z)^\top z
   +\frac{\E_\nu\norm X^2}{2t}+A_t(\theta_t(z))\notag\\
 &=\KL(\nu\Vert\mu)+\frac{\E_\nu\norm{X-z}^2}{2t}-\calR_t(z).
 \label{sup:entropyidentity}
\end{align}
The logarithm of the tilted density is bounded on $K$, so this identity
also holds when the relative entropy is infinite. Nonnegativity of
relative entropy, with equality only for identical distributions,
proves \eqref{sup:entropy}. This is the entropy duality of the log
partition function \cite{wainwright2008graphical} with a mean constraint.

By Proposition~\ref{sup:meanrange}, $\theta_t(z)$ is smooth in $t$.
Differentiate
$A_t^*(z)=\theta_t(z)^\top z-A_t(\theta_t(z))$ at fixed $z$.
The terms containing $\partial_t\theta_t(z)$ cancel, and
\begin{equation}
 \partial_t A_t^*(z)
 =-\partial_t A_t(\theta_t(z))
 =-\frac{\E_{\nu_{t,z}}\norm X^2}{2t^2}.
 \label{sup:conjugatederivative}
\end{equation}
Subtracting the derivative of $\norm z^2/(2t)$ gives the first identity
in \eqref{sup:timeidentity}. The second follows by integration.
\end{proof}

The observation corresponding to the prescribed posterior mean $z$ is
$t\theta_t(z)$ and varies with $t$. Thus \eqref{sup:timeidentity}
describes the change of the penalty at a fixed output coordinate.
Testing \eqref{sup:entropy} with the minimizer at another variance gives
\begin{equation}
 \left(\frac1{2s}-\frac1{2t}\right)V_s(z)
 \le\calR_s(z)-\calR_t(z)
 \le\left(\frac1{2s}-\frac1{2t}\right)V_t(z).
 \label{sup:endpointbounds}
\end{equation}
In particular, $V_s(z)\le V_t(z)$ for $s<t$. These inequalities compare
posterior variances under the same mean constraint.

\section{Canonical comparison and the finite residual}
\label{sup:mainproofs}
This section gives the detailed proofs of the canonical comparison and
uniform approximation bound stated in the main article. Throughout,
$K=\supp\mu$ is compact. The denoisers and proximal sets are defined
in \eqref{sup:denoiser} and \eqref{sup:prox}. When $\aff K=\R^d$,
use $\Omega$, $\theta_t$, $\nu_{t,z}$, and $V_t$ from
Section~\ref{sup:notation} and relative entropy from
Section~\ref{sup:variation}.

\begin{lemma}[Canonical regularizers and their difference]
\label{sup:main:lem-canonical}
Suppose that $\aff K=\R^d$. For each $t>0$, define
\[
 \calR_t(z):=A_t^*(z)-\frac{\norm{z}^2}{2t},
 ~~ z\in\R^d.
\]
Then $\calR_t$ is proper and lower semicontinuous, and
\[
 \Prox_{t\calR_t}(y)=\{D_t(y)\}
 ~~\text{for every }y\in\R^d.
\]
Every proper function $R$ satisfying
$D_t(y)\in\Prox_{tR}(y)$ for all $y\in\R^d$ obeys
$R=\calR_t+c_t$ on $\Omega$, where $c_t\in\R$ is a constant.

For $0<s<t$, define $c_{s,t}:=1/(2s)-1/(2t)>0$.
At every $z\in\Omega$,
\begin{equation}
 \begin{aligned}
 \calR_s(z)-\calR_t(z)
 &=c_{s,t}V_s(z)+\KL(\nu_{s,z}\Vert\nu_{t,z})\\
 &=c_{s,t}V_t(z)-\KL(\nu_{t,z}\Vert\nu_{s,z}).
 \end{aligned}
 \label{sup:main:eq-kl}
\end{equation}
Consequently,
\begin{equation}
 0<c_{s,t}V_s(z)
 \leq\calR_s(z)-\calR_t(z)
 \leq c_{s,t}V_t(z).
 \label{sup:main:eq-sandwich}
\end{equation}
\end{lemma}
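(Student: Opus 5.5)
The proof splits into three parts: the canonical representation, uniqueness up to a constant on $\Omega$, and the entropy identities. All three rest on Proposition~\ref{sup:meanrange}, the identity \eqref{sup:canonicalprox}, and the entropy identity \eqref{sup:entropyidentity}.

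\textbf{Properness, lower semicontinuity, and the canonical representation.} For $R=\calR_t$, properness and lower semicontinuity follow because $A_t^*$ is a conjugate of a finite convex function, finite on $\Omega$, and subtracting a continuous quadratic preserves both properties. Expanding the proximal objective gives \eqref{sup:canonicalprox}: minimizing $\calR_t(z)+\norm{z-y}^2/(2t)$ amounts to minimizing $A_t^*(z)-(y/t)^\top z$. Since $A_t$ is differentiable and strictly convex, Fenchel equality \cite[Theorem~23.5]{rockafellar1970convex} says the minimizers are exactly $\partial A_t(y/t)=\{\nabla A_t(y/t)\}$. By \eqref{sup:moments} this equals $\{D_t(y)\}$.

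\textbf{Uniqueness up to a constant.} Let $R$ be proper with $D_t(y)\in\Prox_{tR}(y)$ for every $y$, and put $g:=R+\norm{\cdot}^2/(2t)$. Minimality at $y=t\theta$ says that for every $x$,
\[
g(x)-\theta^\top x\ge g(\nabla A_t(\theta))-\theta^\top\nabla A_t(\theta).
\]
Hence $g$ is finite on $\Omega=\nabla A_t(\R^d)$ and $\theta\in\partial^{\rm global}g(z)$ at $z=\nabla A_t(\theta)$, meaning $\theta$ is a global affine minorant slope touching $g$ at $z$. I would now compare $g$ with $A_t^*$.

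Write $h(\theta):=\theta^\top\nabla A_t(\theta)-g(\nabla A_t(\theta))$. The supporting-hyperplane inequality, applied at two points $\theta,\theta'$ with $z=\nabla A_t(\theta)$ and $z'=\nabla A_t(\theta')$, yields
\[
g(z')-g(z)\ge\theta^\top(z'-z)
\quad\text{and}\quad
g(z)-g(z')\ge\theta'^\top(z-z').
\]
So $g$ restricted to $\Omega$ is sandwiched between linear functions whose slopes $\theta_t(z),\theta_t(z')$ are continuous in $z$ by Proposition~\ref{sup:meanrange}. It is therefore differentiable on $\Omega$ with $\nabla g=\theta_t=\nabla A_t^*$, which is the content of \cite[Theorem~3(b)]{gribonval2020characterization}. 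Since $\Omega$ is open and connected, $g-A_t^*$ is constant there, and so $R=\calR_t+c_t$ on $\Omega$.

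\textbf{The entropy identities and the sandwich.} I would apply \eqref{sup:entropyidentity} at variance $t$ with the feasible measure $\nu=\nu_{s,z}$, which has mean $z$ and is absolutely continuous with respect to $\mu$. This gives
\[
\KL(\nu_{s,z}\Vert\nu_{t,z})=\KL(\nu_{s,z}\Vert\mu)+\frac{V_s(z)}{2t}-\calR_t(z).
\]
Applying the same identity at variance $s$ with $\nu=\nu_{s,z}$ gives zero on the left, so $\KL(\nu_{s,z}\Vert\mu)=\calR_s(z)-V_s(z)/(2s)$. Substituting yields the first line of \eqref{sup:main:eq-kl}. Exchanging the roles of $s$ and $t$ gives the second line.

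Nonnegativity of relative entropy then gives the two outer inequalities in \eqref{sup:main:eq-sandwich}. It remains to show $V_s(z)>0$. The measure $\nu_{s,z}$ has the same support $K$ as $\mu$, and $K$ is not a singleton because $\aff K=\R^d$ with $d\ge1$, so $\nu_{s,z}$ is not a point mass and its total variance is positive.

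The step I expect to be the main obstacle is the uniqueness step. It has to handle a nonconvex, possibly nonsmooth $R$. The way through is to derive differentiability of $g$ on $\Omega$ from the two-sided hyperplane sandwich together with continuity of $\theta_t$, rather than assuming any regularity of $R$.
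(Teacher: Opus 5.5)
Your proposal is correct and follows the paper's proof essentially step for step. It uses Fenchel equality for the canonical representation, and the two-sided supporting-hyperplane squeeze together with continuity of $\theta_t$ for uniqueness up to a constant on $\Omega$. It obtains the relative entropy identities by the tilted-density substitution with the common mean; invoking \eqref{sup:entropyidentity} twice is the same computation the paper carries out directly. It derives $V_s(z)>0$ from the full support of $\nu_{s,z}$.
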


\begin{proof}
For $R=\calR_t$, the objective in \eqref{sup:prox} becomes
\[
 \calR_t(z)+\frac{\norm{z-y}^2}{2t}
 =
 A_t^*(z)-\frac{y^\top z}{t}+\frac{\norm{y}^2}{2t}.
\]
Fenchel equality \cite[Theorem~23.5]{rockafellar1970convex}
shows that its unique minimizer is
$z=\nabla A_t(y/t)=D_t(y)$.
Since $A_t^*$ is proper and lower semicontinuous, subtracting
the continuous function $\norm{z}^2/(2t)$ gives the same
properties for $\calR_t$.

Uniqueness up to an additive constant also follows from
\cite[Theorem~3(b)]{gribonval2020characterization}.
Here it can be proved directly.
Suppose that $R$ represents $D_t$ at every input, and put
$G(z):=R(z)+\norm{z}^2/(2t)$.
For $z,w\in\Omega$, both $R(z)$ and $R(w)$ are finite because
they are global minimizers at their corresponding inputs.
Write $h:=w-z$.
Comparing the minimizers at $t\theta_t(z)$ and $t\theta_t(w)$
gives
\begin{equation}
 \theta_t(z)^\top h
 \leq G(w)-G(z)
 \leq\theta_t(w)^\top h.
 \label{sup:main:eq-squeeze}
\end{equation}
After subtracting $\theta_t(z)^\top h$, the difference is bounded
between zero and
\[
 \norm{\theta_t(w)-\theta_t(z)}\,\norm{h}.
\]
Continuity of $\theta_t$ therefore implies that $G$ is
differentiable and
$\nabla G(z)=\theta_t(z)=\nabla A_t^*(z)$.
Hence $R-\calR_t=G-A_t^*$ is constant on the convex set $\Omega$.

To compare two variances, use \eqref{sup:partition} and the common
mean $z$ to obtain
\[
 \begin{aligned}
 \KL(\nu_{s,z}\Vert\nu_{t,z})
 &=(\theta_s(z)-\theta_t(z))^\top z
   -A_s(\theta_s(z))+A_t(\theta_t(z))\\
 &~~-c_{s,t}\E_{\nu_{s,z}}\norm{X}^2\\
 &=\calR_s(z)-\calR_t(z)-c_{s,t}V_s(z).
 \end{aligned}
\]
Reversing the two measures similarly gives
\[
 \KL(\nu_{t,z}\Vert\nu_{s,z})
 =-\calR_s(z)+\calR_t(z)+c_{s,t}V_t(z).
\]
These are the identities in \eqref{sup:main:eq-kl}.
Relative entropy is nonnegative, and $V_s(z)>0$ because
$\nu_{s,z}$ has the same full affine support as $\mu$.
This proves \eqref{sup:main:eq-sandwich}.
\end{proof}

\begin{theorem}[No common regularizer at two variances]
\label{sup:main:thm-incompatibility}
If $\mu$ has compact support and is not a point mass, then for every
$0<s<t$ there is no proper lower semicontinuous function $R$ such that
\begin{equation}
 D_s(y)\in\Prox_{sR}(y),
 ~~
 D_t(y)\in\Prox_{tR}(y)
 ~~\text{for all }y\in\R^d.
 \label{sup:main:eq-common}
\end{equation}
\end{theorem}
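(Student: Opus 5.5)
I will argue by contradiction, following the same two-stage structure as the main-text proof: first the full-dimensional case $\aff K=\R^d$, then a reduction of lower-dimensional supports to their affine hull.

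\emph{Full-dimensional case.} Suppose a common $R$ exists. By Lemma~\ref{sup:main:lem-canonical} applied at $s$ and at $t$ separately, $R=\calR_s+c_s$ and $R=\calR_t+c_t$ on $\Omega$. These require $R$ to be finite on $\Omega$, which holds because every $z\in\Omega$ is the posterior mean $D_u(u\theta_u(z))$ and hence a global minimizer. It follows that $\calR_s-\calR_t\equiv c_t-c_s$ is constant on $\Omega$. By \eqref{sup:main:eq-sandwich} this constant is strictly positive.

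To contradict this, I will produce points of $\Omega$ where the difference tends to zero. The plan is to let $M:=\max_{x\in K}\norm x$, attained at some $a\in K$ by compactness. Fix $z_0\in\Omega$ and set $z_\lambda=(1-\lambda)z_0+\lambda a$. The segment from an interior point to a point of the closure stays in the interior for $\lambda<1$ (Rockafellar, Theorem~6.1), so $z_\lambda\in\Omega$. Since $\nu_{t,z_\lambda}$ is supported on $K$ with mean $z_\lambda$,
\[
V_t(z_\lambda)=\E\norm X^2-\norm{z_\lambda}^2\le M^2-\norm{z_\lambda}^2\to0 .
\]
The upper bound in \eqref{sup:main:eq-sandwich} then forces $\calR_s(z_\lambda)-\calR_t(z_\lambda)\to0$, contradicting a positive constant. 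The key point here is choosing a boundary point of maximal norm, so that the variance bound collapses. This is the main idea, and once seen it is routine.

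\emph{Lower-dimensional case.} Let $r=\dim\aff K$; then $r\ge1$ because $\mu$ is not a point mass. Write $\aff K=c+U\R^r$ with $U^\top U=I_r$, let $\widetilde\mu$ be the law of $U^\top(X-c)$, and let $\widetilde D_u$ denote its posterior means. The Gaussian kernel splits orthogonally: for $y\in\R^r$ the normal component of the observation cancels in \eqref{sup:denoiser}, so $D_u(c+Uy)=c+U\widetilde D_u(y)$. Define $\widetilde R(z)=R(c+Uz)$. Because $D_u(c+Uy)$ is a global minimizer over all of $\R^d$, it is also a minimizer over the subset $c+U\R^r$, and on that subset the objective equals $\widetilde R(z)+\norm{z-y}^2/(2u)$. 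Hence $\widetilde D_u(y)\in\Prox_{u\widetilde R}(y)$ for $u\in\{s,t\}$.

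The restriction $\widetilde R$ is lower semicontinuous as a composition with an affine map. It is proper because it is finite at the posterior means, being the values of $R$ at minimizers of a proper objective. The prior $\widetilde\mu$ has compact support with full affine hull $\R^r$, so the full-dimensional case yields a contradiction.

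The only delicate bookkeeping is in this reduction: checking properness of $\widetilde R$ and verifying the denoiser identity, which uses $\norm{x-(c+Uy)}^2=\norm{U^\top(x-c)-y}^2$ for $x\in\aff K$.
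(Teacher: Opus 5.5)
Your proposal is correct and follows the paper's proof essentially step for step. Like the paper, you apply the uniqueness part of Lemma~\ref{sup:main:lem-canonical} at both variances, use the maximal-norm point $a$ with the collapsing bound $V_t(z_\lambda)\le M^2-\norm{z_\lambda}^2$, and use the same affine reduction via $\widetilde R(z)=R(c+Uz)$. Your two added checks are details the paper leaves implicit, and both are correct: finiteness of $R$ on $\Omega$ through $z=D_u(u\theta_u(z))$, and the identity $\norm{x-(c+Uy)}^2=\norm{U^\top(x-c)-y}^2$ for $x\in\aff K$.
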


\begin{proof}
Suppose first that $\aff K=\R^d$.
By Lemma~\ref{sup:main:lem-canonical}, a common regularizer would make
$\calR_s-\calR_t$ constant on $\Omega$.
This difference is strictly positive by \eqref{sup:main:eq-sandwich}.

Let $M:=\max_{x\in K}\norm{x}$, and choose $a\in K$ with
$\norm{a}=M$.
Fix $z_0\in\Omega$ and define
\[
 z_\lambda:=(1-\lambda)z_0+\lambda a,
 ~~ 0\leq\lambda<1.
\]
These points belong to $\Omega$
\cite[Theorem~6.1]{rockafellar1970convex}.
Since $\nu_{t,z_\lambda}$ is supported on $K$ and has mean
$z_\lambda$,
\[
 V_t(z_\lambda)
 =\E_{\nu_{t,z_\lambda}}\norm{X}^2-\norm{z_\lambda}^2
 \leq M^2-\norm{z_\lambda}^2
 \longrightarrow0
 ~~\text{as }\lambda\uparrow1.
\]
Applying \eqref{sup:main:eq-sandwich} yields
\begin{equation}
 0<\calR_s(z_\lambda)-\calR_t(z_\lambda)
 \leq c_{s,t}(M^2-\norm{z_\lambda}^2)
 \longrightarrow0.
 \label{sup:main:eq-boundary}
\end{equation}
A strictly positive constant cannot have this limit, giving
the contradiction.

For a lower-dimensional support, let
$r:=\dim(\aff K)$ and write $\aff K=c+U\R^r$.
Here $c\in\aff K$ and the columns of
$U\in\R^{d\times r}$ form an orthonormal basis for the direction
space of $\aff K$, so $U^\top U=I_r$.
Let $\widetilde\mu$ be the distribution of $U^\top(X-c)$,
and let $\widetilde D_u$ be its posterior mean at noise variance
$u>0$. The Gaussian kernel gives
\[
 D_u(c+Uy)=c+U\widetilde D_u(y),
 ~~ y\in\R^r.
\]
If a common $R$ existed, its restriction
$\widetilde R(z):=R(c+Uz)$ would satisfy
\[
 \widetilde D_u(y)\in\Prox_{u\widetilde R}(y),
 ~~ y\in\R^r,~ u\in\{s,t\},
\]
by restricting the competing points in \eqref{sup:prox} to
$\aff K$.
The restriction is lower semicontinuous and is finite at the
posterior means, hence proper.
Since $\mu$ is not a point mass, $r\geq1$.
The induced prior has compact support with affine hull $\R^r$,
so the preceding argument applies.
\end{proof}

Fix $0<s<t$, define $\alpha:=1-s/t$, and set
\begin{equation}
C_{s,t}(y):=D_s((s/t)y+\alpha D_t(y))-D_t(y).
\label{sup:main:residual}
\end{equation}
\begin{theorem}[Uniform approximation lower bound]
\label{sup:main:thm-uniform}
Suppose that $D_s$ is $L_s$-Lipschitz, meaning that
\[
 \norm{D_s(x)-D_s(y)}\leq L_s\norm{x-y}
 ~~\text{for all }x,y\in\R^d.
\]
Let $R$ be proper and lower semicontinuous, and let
$P_u(y)\in\Prox_{uR}(y)$ be defined for every $y\in\R^d$
and $u\in\{s,t\}$.
Define the uniform errors
\[
 \varepsilon_u:=
 \sup_{y\in\R^d}\norm{P_u(y)-D_u(y)},
 ~~ u\in\{s,t\}.
\]
Then, for every $y\in\R^d$,
\begin{equation}
 \begin{aligned}
 \norm{C_{s,t}(y)}
 &\leq\varepsilon_s+(1+\alpha L_s)\varepsilon_t,\\
 \max\{\varepsilon_s,\varepsilon_t\}
 &\geq\frac{\norm{C_{s,t}(y)}}{2+\alpha L_s}.
 \end{aligned}
 \label{sup:main:eq-uniform}
\end{equation}
If $\mu$ is not a point mass, then $C_{s,t}$ is not identically
zero. One may take $L_s=B^2/(4s)$, where
\[
 B:=\operatorname{diam}K
   =\sup_{x,z\in K}\norm{x-z}
\]
is the diameter of the support.
\end{theorem}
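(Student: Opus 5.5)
The argument has three parts: a compatibility identity for any common regularizer, an error-propagation inequality that yields \eqref{sup:main:eq-uniform}, and a reduction of $C_{s,t}\equiv0$ to the contradiction in Theorem~\ref{sup:main:thm-incompatibility}. The Lipschitz constant comes from a posterior covariance bound.

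\emph{Compatibility identity.} Set $z:=P_t(y)$ and $w:=(s/t)y+\alpha z$, and write $Q_u(x;q):=R(x)+\norm{x-q}^2/(2u)$. Expanding the squares, the terms linear in $x$ match because $w/s=y/t+c_{s,t}\cdot 2z$. This gives, for every $x$,
\[
 Q_s(x;w)-Q_s(z;w)=Q_t(x;y)-Q_t(z;y)+c_{s,t}\norm{x-z}^2 .
\]
The right side is at least $c_{s,t}\norm{x-z}^2$, which is positive for $x\neq z$. Hence $z$ is the unique global minimizer at $(s,w)$, so $P_s(w)=P_t(y)$. No convexity of $R$ is used.

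\emph{Error propagation.} Put $w_P:=w$ as above and $w_D:=(s/t)y+\alpha D_t(y)$. Then
\[
 C_{s,t}(y)=\bigl[D_s(w_D)-D_s(w_P)\bigr]+\bigl[D_s(w_P)-P_s(w_P)\bigr]+\bigl[P_t(y)-D_t(y)\bigr],
\]
where the identity $P_s(w_P)=P_t(y)$ was inserted. Since $\norm{w_D-w_P}=\alpha\norm{D_t(y)-P_t(y)}$, the Lipschitz assumption and the triangle inequality give $\norm{C_{s,t}(y)}\le\varepsilon_s+(1+\alpha L_s)\varepsilon_t$. Bounding both $\varepsilon_s$ and $\varepsilon_t$ by their maximum gives the second inequality in \eqref{sup:main:eq-uniform}.

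\emph{Nontriviality of $C_{s,t}$.} This is the main step. First take $\aff K=\R^d$. Fix $z\in\Omega$ and let $y:=t\theta_t(z)$, so $D_t(y)=z$. If $C_{s,t}(y)=0$, then $D_s\bigl((s/t)y+\alpha z\bigr)=z=D_s(s\theta_s(z))$. By \eqref{sup:moments}, $D_s=\nabla A_s(\cdot/s)$, and $\nabla A_s$ is injective. Therefore $s\theta_t(z)+\alpha z=s\theta_s(z)$. Dividing by $s$ and using $\alpha/s=1/s-1/t$ gives $\theta_s(z)-z/s=\theta_t(z)-z/t$. By \eqref{sup:canonicalderivatives}, this says $\nabla\calR_s(z)=\nabla\calR_t(z)$.

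If $C_{s,t}\equiv0$, this holds on the connected open set $\Omega$, so $\calR_s-\calR_t$ is constant there. It is strictly positive by \eqref{sup:main:eq-sandwich}, yet tends to $0$ along $z_\lambda$ by \eqref{sup:main:eq-boundary}. This contradiction shows $C_{s,t}\not\equiv0$.

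For lower-dimensional support, use the intrinsic-coordinate identity $D_u(c+Uy)=c+U\widetilde D_u(y)$. Writing $\widetilde C_{s,t}$ for the residual built from $\widetilde D_s,\widetilde D_t$, this gives $C_{s,t}(c+Uy)=U\widetilde C_{s,t}(y)$. The full-dimensional case applies to $\widetilde\mu$ because $r\ge1$.

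\emph{Lipschitz constant.} Differentiating \eqref{sup:denoiser} gives $JD_s(y)=\Cov(X\mid Y_s=y)/s$. This matrix is symmetric positive semidefinite, so its operator norm is $\sup_{\norm v=1}\Var(v^\top X\mid Y_s=y)/s$. For each unit $v$, the posterior law of $v^\top X$ is supported on $K$, so $v^\top X$ lies in an interval of length at most $B$. Popoviciu's inequality then bounds its variance by $B^2/4$. The mean value inequality along segments yields $L_s=B^2/(4s)$.
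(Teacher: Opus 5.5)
Your proposal is correct and follows the paper's proof essentially step for step. It uses the same square expansion for compatibility, the same three-term triangle decomposition, the same injectivity argument reducing $C_{s,t}\equiv0$ to $\nabla\calR_s=\nabla\calR_t$ on $\Omega$ followed by the boundary contradiction, and the same covariance-Jacobian variance bound for $L_s$. Your explicit identity $C_{s,t}(c+Uy)=U\widetilde C_{s,t}(y)$, which follows from $s/t+\alpha=1$, fills in the affine-reduction step that the paper only asserts.
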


\begin{proof}
Fix $y\in\R^d$, and put
$z:=P_t(y)$ and $w:=(s/t)y+\alpha z$.
For $u>0$ and $x,q\in\R^d$, write
\[
 Q_u(x;q):=R(x)+\frac{\norm{x-q}^2}{2u}
\]
for the proximal objective.
Expanding the quadratic terms gives
\begin{equation}
 Q_s(x;w)-Q_s(z;w)
 =
 Q_t(x;y)-Q_t(z;y)+c_{s,t}\norm{x-z}^2,
 \label{sup:main:eq-square}
\end{equation}
where $c_{s,t}=1/(2s)-1/(2t)>0$.
Since $z$ minimizes $Q_t(\cdot;y)$, it is the unique global
minimizer of $Q_s(\cdot;w)$.
Thus
$P_s((s/t)y+\alpha P_t(y))=P_t(y)$.

Define the two inputs
\[
 w_D:=\frac{s}{t}y+\alpha D_t(y),
 ~~
 w_P:=\frac{s}{t}y+\alpha P_t(y).
\]
Using $P_s(w_P)=P_t(y)$ and the Lipschitz property of $D_s$,
we obtain
\begin{equation}
 \norm{C_{s,t}(y)}
 \leq
 \norm{D_s(w_P)-P_s(w_P)}
 +(1+\alpha L_s)\norm{P_t(y)-D_t(y)}.
 \label{sup:main:eq-local}
\end{equation}
Bounding the two errors by $\varepsilon_s$ and $\varepsilon_t$
proves \eqref{sup:main:eq-uniform}.

To prove that the residual is nonzero somewhere, first suppose
that $\aff K=\R^d$.
If $C_{s,t}$ were identically zero, substitution of
$y=t\theta_t(z)$ into \eqref{sup:main:residual} would give
\[
 D_s(s\theta_t(z)+\alpha z)=z
 ~~\text{for every }z\in\Omega.
\]
The map $D_s$ is injective because
$D_s(y)=\nabla A_s(y/s)$.
Since $D_s(s\theta_s(z))=z$, it follows that
\[
 s\theta_s(z)=s\theta_t(z)+\alpha z.
\]
Consequently,
\[
 \nabla\calR_s(z)
 =\theta_s(z)-\frac{z}{s}
 =\theta_t(z)-\frac{z}{t}
 =\nabla\calR_t(z).
\]
The difference $\calR_s-\calR_t$ would therefore be constant
on $\Omega$, contradicting \eqref{sup:main:eq-boundary}.
For a lower-dimensional support, the same argument applies
after the affine reduction used in
Theorem~\ref{sup:main:thm-incompatibility}.

Finally, differentiation of \eqref{sup:denoiser} gives
\[
 JD_s(y)=\frac1s\Cov(X\mid Y_s=y),
\]
where $JD_s(y)$ is the Jacobian matrix of $D_s$.
For any scalar random variable $Q$ taking values in an interval
$[\ell,r]$, nonnegativity of $\E[(Q-\ell)(r-Q)]$ gives
\[
 \Var(Q)
 \leq(\E Q-\ell)(r-\E Q)
 \leq\frac{(r-\ell)^2}{4},
\]
where $\Var$ denotes scalar variance
\cite{bhatia2000variance}.
For every unit vector $v\in\R^d$, the projection $v^\top X$
takes values in an interval of length at most $B$.
Its conditional variance is therefore at most $B^2/4$.
Hence the operator norm of $JD_s(y)$ is at most $B^2/(4s)$
at every input, proving the stated Lipschitz bound.
\end{proof}

\section{Certificates from approximate denoisers}
\label{sup:approximation}
Fix $0<s<t$, put $\alpha=1-s/t$, and suppose $D_s$ is globally
$L_s$-Lipschitz. Given maps $\widehat D_s,\widehat D_t:\R^d\to\R^d$,
define
\begin{equation}
 \begin{split}
 C_{s,t}(y)&=D_s\big((s/t)y+\alpha D_t(y)\big)-D_t(y),\\
 \widehat C_{s,t}(y)&=
 \widehat D_s\big((s/t)y+\alpha\widehat D_t(y)\big)-\widehat D_t(y).
 \end{split}
 \label{sup:residuals}
\end{equation}
Here $\norm{F}_\infty=\sup_{y\in\R^d}\norm{F(y)}$.

\begin{proposition}[Correction for approximation error]
\label{sup:approximatecertificate}
If $\norm{\widehat D_u-D_u}_\infty\le\eta_u$ for $u\in\{s,t\}$, then
\begin{equation}
 \norm{\widehat C_{s,t}-C_{s,t}}_\infty
 \le\eta_s+(1+\alpha L_s)\eta_t.
 \label{sup:residualerror}
\end{equation}
For any selections $P_u(y)\in\Prox_{uR}(y)$ defined at all inputs and
both variances from a common proper lower semicontinuous $R$,
\begin{equation}
 \max_{u\in\{s,t\}}\norm{P_u-D_u}_\infty
 \ge\frac{
 [\norm{\widehat C_{s,t}(y)}-\eta_s-(1+\alpha L_s)\eta_t]_+}
 {2+\alpha L_s}
 ~~(y\in\R^d),
 \label{sup:correctedcertificate}
\end{equation}
where $[q]_+=\max\{q,0\}$.
\end{proposition}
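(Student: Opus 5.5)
The plan is a triangle-inequality perturbation argument for \eqref{sup:residualerror}, followed by substitution into the uniform bound of Theorem~\ref{sup:main:thm-uniform}.

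\textbf{Step 1: the residual error bound.} Fix $y\in\R^d$ and write
\[
 \widehat w:=(s/t)y+\alpha\widehat D_t(y),\qquad w:=(s/t)y+\alpha D_t(y).
\]
Split the difference of the residuals as
\[
 \widehat C_{s,t}(y)-C_{s,t}(y)
 =\bigl[\widehat D_s(\widehat w)-D_s(\widehat w)\bigr]
 +\bigl[D_s(\widehat w)-D_s(w)\bigr]
 -\bigl[\widehat D_t(y)-D_t(y)\bigr].
\]
The three brackets are bounded as follows.
\begin{itemize}
\item The first bracket is at most $\eta_s$ in norm, because the sup-norm bound holds at every input, including $\widehat w$.
\item The second bracket is at most $L_s\norm{\widehat w-w}=\alpha L_s\norm{\widehat D_t(y)-D_t(y)}\le\alpha L_s\eta_t$, by the Lipschitz property of $D_s$.
\item The third bracket is at most $\eta_t$.
\end{itemize}
Summing gives $\eta_s+(1+\alpha L_s)\eta_t$. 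Since this bound does not depend on $y$, taking the supremum over $y$ yields \eqref{sup:residualerror}.

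\textbf{Step 2: the corrected certificate.} By the reverse triangle inequality and Step~1,
\[
 \norm{C_{s,t}(y)}\ge\norm{\widehat C_{s,t}(y)}-\eta_s-(1+\alpha L_s)\eta_t .
\]
Because $\norm{C_{s,t}(y)}\ge0$, the same inequality holds with the right side replaced by its positive part. Now apply the second inequality in \eqref{sup:main:eq-uniform}. With $\varepsilon_u=\norm{P_u-D_u}_\infty$, it gives
\[
 \max_u\varepsilon_u\ge\frac{\norm{C_{s,t}(y)}}{2+\alpha L_s}.
\]
Combining the two displays yields \eqref{sup:correctedcertificate}.

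\textbf{Remaining checks.} There is no real obstacle; the only points that need care are these.
\begin{itemize}
\item The Lipschitz hypothesis must be applied to the exact denoiser $D_s$, not to $\widehat D_s$. This is why the split in Step~1 inserts $D_s(\widehat w)$ as the intermediate term.
\item The uniform bound of Theorem~\ref{sup:main:thm-uniform} must be applicable to arbitrary selections of a common $R$, including the case where $\varepsilon_u=+\infty$. In that case \eqref{sup:correctedcertificate} holds trivially.
\end{itemize}
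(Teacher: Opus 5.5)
Your proof is correct and follows the paper's argument. You insert $D_s(\widehat w)$ to obtain \eqref{sup:residualerror}, then combine it with the bound $\norm{C_{s,t}(y)}\le(2+\alpha L_s)\max\{\varepsilon_s,\varepsilon_t\}$ to get \eqref{sup:correctedcertificate}; the only difference is that you cite Theorem~\ref{sup:main:thm-uniform} for that bound, whereas the paper briefly re-derives it from the square expansion and the compatibility identity.
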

\begin{proof}
Set $w=(s/t)y+\alpha D_t(y)$ and
$\widehat w=(s/t)y+\alpha\widehat D_t(y)$. Adding and subtracting
$D_s(\widehat w)$ gives
\begin{align*}
 \norm{\widehat C_{s,t}(y)-C_{s,t}(y)}
 &\le\norm{\widehat D_s(\widehat w)-D_s(\widehat w)}
     +\norm{D_s(\widehat w)-D_s(w)}
     +\norm{\widehat D_t(y)-D_t(y)}\\
 &\le\eta_s+\alpha L_s\eta_t+\eta_t.
\end{align*}
This proves \eqref{sup:residualerror}. Global minimality gives the
identity
$P_s((s/t)y+\alpha P_t(y))=P_t(y)$ by the square expansion in \eqref{sup:resolventproof}.
Writing $\varepsilon_u=\norm{P_u-D_u}_\infty$, the same decomposition
then yields
\[
 \norm{C_{s,t}(y)}\le\varepsilon_s+(1+\alpha L_s)\varepsilon_t
 \le(2+\alpha L_s)\max\{\varepsilon_s,\varepsilon_t\}.
\]
Combining this inequality with \eqref{sup:residualerror} proves
\eqref{sup:correctedcertificate}.
\end{proof}

For a compact support of diameter $B$, one may take $L_s=B^2/(4s)$.
The certificate is positive when the observed residual exceeds the
approximation allowance $\eta_s+(1+\alpha L_s)\eta_t$.

\section{An explicit excess risk lower bound}
\label{sup:riskproof}
Assume that $K=\supp\mu$ is compact. Fix $0<s<t$, set $\alpha:=1-s/t$, and use $C_{s,t}$ from \eqref{sup:residuals}. For measurable estimators $P_u$, define
\[
\calE_u(P_u):=\E\norm{P_u(Y_u)-X}^2-\E\norm{D_u(Y_u)-X}^2
=\E\norm{P_u(Y_u)-D_u(Y_u)}^2.
\]
The equality follows from the conditional expectation projection identity \cite[Section~9.4]{williams1991probability}. For global proximal selections of a common $R$, the square expansion \eqref{sup:resolventproof} gives compatibility. With $w_P:=(s/t)y+\alpha P_t(y)$, the triangle inequality yields
\begin{equation}
\norm{C_{s,t}(y)}\le\norm{D_s(w_P)-P_s(w_P)}+(1+\alpha L_s)\norm{P_t(y)-D_t(y)},
\label{sup:localrisk}
\end{equation}
where $L_s$ is a Lipschitz constant for $D_s$. The covariance Jacobian formula and the scalar variance bound \cite{bhatia2000variance} give $L_s=(\operatorname{diam}K)^2/(4s)$.

\begin{corollary}[Positive excess risk]
\label{sup:risk}
Suppose that $K=\supp\mu$ is compact and not a singleton.
Choose $M>0$ such that $K\subseteq\mathbb B(0,M)$, and let
$B:=\operatorname{diam}K$.
Choose an input $y_0\in\R^d$ with
$\delta:=\norm{C_{s,t}(y_0)}>0$.
Define
\begin{equation}
 \begin{aligned}
 L&:=\frac{B^2}{4s},
 ~~
 a:=\frac{\delta}{2+\alpha L},\\
 b&:=\max\left\{
 \norm{y_0},
 \norm{\frac{s}{t}y_0+\alpha D_t(y_0)}+\alpha a
 \right\}.
 \end{aligned}
 \label{sup:riskconstants}
\end{equation}
Here $L$ is a common Lipschitz bound for $D_s$ and $D_t$;
the positive constants $a$ and $b$ will bound the size of an
estimation error and the norm of an input where it occurs.

For every proper lower semicontinuous $R$ and measurable
selections $P_u(y)\in\Prox_{uR}(y)$ defined at all inputs
for $u\in\{s,t\}$,
\begin{equation}
 \max\{\calE_s(P_s),\calE_t(P_t)\}
 \geq
 \frac{\omega_d a^{d+2}}
 {16^{d+1}L^d(2\pi t)^{d/2}}
 \exp\!\left[
 -\frac{(b+M+a/(4L))^2}{2s}
 \right]
 >0,
 \label{sup:riskbound}
\end{equation}
where $\omega_d$ is the $d$-dimensional volume of the
Euclidean unit ball.
\end{corollary}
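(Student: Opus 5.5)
The argument has three steps. First, locate a single input of norm at most $b$ where some selection errs by at least $a$. Second, use monotonicity of proximal selections and Lipschitz continuity of the denoisers to spread that error to a small ball. Third, integrate the squared error against a lower bound on the observation density.

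\emph{Pointwise error.} Apply \eqref{sup:localrisk} at $y_0$, with $w_P:=(s/t)y_0+\alpha P_t(y_0)$:
\[
\delta\le\norm{D_s(w_P)-P_s(w_P)}+(1+\alpha L)\norm{P_t(y_0)-D_t(y_0)}.
\]
Since $\delta=(2+\alpha L)a$, one of two cases holds.
\begin{itemize}
\item[(i)] $\norm{P_t(y_0)-D_t(y_0)}\ge a$. The error occurs at $y_0$, and $\norm{y_0}\le b$.
\item[(ii)] $\norm{P_t(y_0)-D_t(y_0)}<a$ and $\norm{D_s(w_P)-P_s(w_P)}\ge a$. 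Then $\norm{w_P-w_D}=\alpha\norm{P_t(y_0)-D_t(y_0)}<\alpha a$, so $\norm{w_P}\le b$.
\end{itemize}
In either case there are $u\in\{s,t\}$ and $\bar y$ with $\norm{\bar y}\le b$ and $\norm{P_u(\bar y)-D_u(\bar y)}\ge a$. Here $L$ bounds the Lipschitz constants of both $D_s$ and $D_t$, since $B^2/(4t)\le B^2/(4s)$ by the Jacobian bound in Theorem~\ref{sup:main:thm-uniform}.

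\emph{Spreading by monotonicity.} Global minimality gives, for any $x,y$, the inequality $Q_u(P_u(x);x)\le Q_u(P_u(y);x)$ and its counterpart with $x$ and $y$ swapped. Adding them yields $\langle P_u(x)-P_u(y),x-y\rangle\ge0$.

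Let $e:=(P_u(\bar y)-D_u(\bar y))/\norm{P_u(\bar y)-D_u(\bar y)}$ and set $c:=\bar y+\frac{a}{8L}e$. Take $x\in\mathbb B(c,a/(16L))$. Monotonicity cannot bound $P_u(x)$ pointwise, so we argue by contradiction. Suppose $\norm{P_u(x)-D_u(x)}<a/4$. Then by Lipschitz continuity,
\[
\norm{P_u(x)-D_u(\bar y)}\le \tfrac a4+L\norm{x-\bar y}\le \tfrac a4+L\cdot\tfrac{3a}{16L}<\tfrac a2 .
\]
Therefore $\langle P_u(x)-P_u(\bar y),e\rangle< a/2-a=-a/2$.

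Monotonicity requires $\langle P_u(x)-P_u(\bar y),x-\bar y\rangle\ge0$. Write $x-\bar y=\frac{a}{8L}e+r$ with $\norm r\le a/(16L)$. The direction of $x-\bar y$ is within a cone around $e$, but the component of $P_u(x)-P_u(\bar y)$ orthogonal to $e$ is not controlled, so the contradiction does not close directly.

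I would try a second ball. Write $p:=P_u(\bar y)$. Monotonicity at points $x=\bar y+\rho e$ gives $\langle P_u(x)-p,e\rangle\ge0$ when $r=0$. Combined with the bound $\norm{P_u(x)-D_u(\bar y)}<a/2$, this is a contradiction along the segment. To handle the full ball, I would use the triangle inequality in norm rather than projection. For $x$ near $c$, compare $P_u(x)$ with both $p$ and $D_u(x)$, using $\langle P_u(x)-p,x-\bar y\rangle\ge0$ and $\norm{x-\bar y}\le 3a/(16L)$. The claim to aim for is that $\norm{P_u(x)-p}\ge$ something fails. This step, turning monotonicity plus Lipschitz continuity into an error of at least $a/4$ on the whole ball of radius $a/(16L)$, is the main obstacle. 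The constants $8L$ and $16$ are chosen to leave room, and may need adjustment to match $a/(4L)$ in the exponent.

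\emph{Integration.} On the ball, $\norm{x}\le b+a/(4L)$ (using $3a/(16L)\le a/(4L)$). For any $X\in K$,
\[
\norm{x-X}\le b+M+a/(4L),
\]
so
\[
p_u(x)\ge(2\pi u)^{-d/2}\exp\!\left[-\frac{(b+M+a/(4L))^2}{2u}\right].
\]
Using $s\le u\le t$, this is at least $(2\pi t)^{-d/2}\exp[-(\cdots)^2/(2s)]$.

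By the projection identity, $\calE_u(P_u)=\E\norm{P_u(Y_u)-D_u(Y_u)}^2$. This is at least $(a/4)^2$ times the ball volume $\omega_d(a/(16L))^d$ times the density bound, giving
\[
\calE_u(P_u)\ge\frac{\omega_d a^{d+2}}{16^{d+1}L^d(2\pi t)^{d/2}}\exp\!\left[-\frac{(b+M+a/(4L))^2}{2s}\right].
\]
This is \eqref{sup:riskbound}. Measurability of the selections is what makes these expectations defined.
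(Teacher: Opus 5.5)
Your first and third steps are correct and coincide with the paper's proof. These are the case split at $y_0$, which yields $\norm{\bar y}\le b$ with an error of at least $a$, and the integration against $(2\pi t)^{-d/2}\exp[-(b+M+a/(4L))^2/(2s)]$. The gap is the spreading step, which you leave unresolved. The reason you give for it failing is also mistaken: you do not need separate control of the component of $P_u(x)-P_u(\bar y)$ orthogonal to $e$. The missing idea is to pair the monotonicity inequality with the displacement $h:=x-\bar y$ itself, and to apply Cauchy--Schwarz to whole vectors. Monotonicity gives $\langle P_u(\bar y+h),h\rangle\ge\langle P_u(\bar y),h\rangle$. The Lipschitz bound gives $\langle D_u(\bar y+h),h\rangle\le\langle D_u(\bar y),h\rangle+L\norm{h}^2$. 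Also $P_u(\bar y)-D_u(\bar y)=\norm{P_u(\bar y)-D_u(\bar y)}\,e$ has norm at least $a$. Hence, whenever $e^\top h\ge0$,
\[
 \norm{P_u(\bar y+h)-D_u(\bar y+h)}
 \ge\Bigl\langle P_u(\bar y+h)-D_u(\bar y+h),\frac{h}{\norm h}\Bigr\rangle
 \ge a\,\frac{e^\top h}{\norm h}-L\norm h .
\]
The paper requires $e^\top h\ge\norm h/2$ and $\norm h\le a/(4L)$, which gives $a/4$. Both conditions hold for every $h$ with $\bar y+h\in\mathbb B(\bar y+\tfrac{3a}{16L}e,\tfrac{a}{16L})$.

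Your own ball also works once this is seen. For $h\in\mathbb B(\tfrac{a}{8L}e,\tfrac{a}{16L})$ you have $\norm h\le\tfrac{3a}{16L}$. Because the radius is half the distance from the center to the origin, you also have $e^\top h\ge\tfrac{\sqrt3}{2}\norm h$. In your contradiction format, $\norm{P_u(x)-D_u(x)}<a/4$ gives $\norm{P_u(x)-D_u(\bar y)}<7a/16$, and hence
\[
 \langle P_u(x)-P_u(\bar y),h\rangle
 =\langle P_u(x)-D_u(\bar y),h\rangle-\norm{P_u(\bar y)-D_u(\bar y)}\,e^\top h
 <\Bigl(\frac{7}{16}-\frac{\sqrt3}{2}\Bigr)a\norm h<0 .
\]
This contradicts monotonicity. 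The radius is again $a/(16L)$, and all points have norm at most $b+a/(4L)$, so your integration step then yields exactly the stated bound. As submitted, however, the proposal stops at this crux, with speculation about a second ball and adjusted constants, so it does not yet prove the corollary.
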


\begin{proof}
Adding the global minimality inequalities at two inputs
$x,y\in\R^d$ gives
\[
 \langle P_u(x)-P_u(y),x-y\rangle\geq0.
\]
Thus each selection $P_u$ is monotone, even when $R$ is nonconvex.

We first locate an error of size at least $a$ at an input of
norm at most $b$.
If $\norm{P_t(y_0)-D_t(y_0)}\geq a$, take $u=t$ and $x_0=y_0$.
Otherwise, set
\[
 w_P:=\frac{s}{t}y_0+\alpha P_t(y_0).
\]
By \eqref{sup:localrisk} and $\delta=(2+\alpha L)a$,
\[
 \norm{P_s(w_P)-D_s(w_P)}
 \geq
 \delta-(1+\alpha L)\norm{P_t(y_0)-D_t(y_0)}
 >a.
\]
Moreover,
\[
 \norm{w_P}
 \leq
 \norm{\frac{s}{t}y_0+\alpha D_t(y_0)}
 +\alpha a
 \leq b.
\]
In this case, take $u=s$ and $x_0=w_P$.
In either case,
\[
 \norm{x_0}\leq b,
 ~~
 \norm{P_u(x_0)-D_u(x_0)}\geq a.
\]

For this choice of $u$ and $x_0$, write $P:=P_u$ and $D:=D_u$.
Define the unit error direction $v$ and the radius $r$ by
\[
 v:=
 \frac{P(x_0)-D(x_0)}{\norm{P(x_0)-D(x_0)}},
 ~~
 r:=\frac{a}{4L}.
\]
For any displacement $h\in\R^d$ satisfying
$0<\norm{h}\leq r$ and $v^\top h\geq\norm{h}/2$,
monotonicity of $P$ and the Lipschitz bound for $D$ imply
\[
 \begin{aligned}
 \left\langle
 P(x_0+h)-D(x_0+h),\frac{h}{\norm{h}}
 \right\rangle
 &\geq
 \left\langle
 P(x_0)-D(x_0),\frac{h}{\norm{h}}
 \right\rangle
 -L\norm{h}\\
 &\geq \frac a2-Lr
 =\frac a4.
 \end{aligned}
\]
These displacements include the ball
$\mathbb B(3rv/4,r/4)$.
Indeed, every $h$ in this ball has
$\norm{h}\leq r$ and $v^\top h\geq r/2\geq\norm{h}/2$.
Therefore,
\[
 \norm{P(y)-D(y)}\geq\frac a4
 ~~
 \text{for all }
 y\in\mathbb B(x_0+3rv/4,r/4).
\]

Let $p_u$ denote the density of $Y_u$.
Every point $y$ in this last ball satisfies
$\norm{y}\leq b+r$.
Since $K\subseteq\mathbb B(0,M)$ and $u\in\{s,t\}$,
\[
 \begin{aligned}
 p_u(y)
 &=(2\pi u)^{-d/2}
   \int_K\exp\!\left[-\frac{\norm{y-x}^2}{2u}\right]\mu(dx)\\
 &\geq
 (2\pi t)^{-d/2}
 \exp\!\left[-\frac{(b+r+M)^2}{2s}\right].
 \end{aligned}
\]
Integrating the squared error over this ball yields
\[
 \calE_u(P_u)
 \geq
 \frac{a^2}{16}\,
 \omega_d\left(\frac r4\right)^d
 (2\pi t)^{-d/2}
 \exp\!\left[-\frac{(b+r+M)^2}{2s}\right].
\]
Substitution of $r=a/(4L)$ proves \eqref{sup:riskbound}.
\end{proof}

\section{Gaussian characterization from an accumulation of variances}
\label{sup:gaussian}
We now allow $\mu$ to be an arbitrary probability measure on $\R^d$ and
use the kernel definition \eqref{sup:denoiser}. The noise has covariance $tI_d$. Here $JD_t$ denotes the Jacobian in the observation variable, and $\Delta:=\sum_{j=1}^d\partial_{y_j}^2$ is the Laplacian, acting componentwise on vectors. For symmetric matrices, $A\preceq B$ means that $B-A$ is positive semidefinite.

\begin{theorem}[Gaussian characterization]
\label{sup:gaussiantheorem}
Let $S\subset(0,\infty)$ have an accumulation point in $(0,\infty)$.
If a proper lower semicontinuous function $R$ satisfies
\begin{equation}
 D_t(y)\in\Prox_{tR}(y)
 ~~(t\in S,\ y\in\R^d),
 \label{sup:commonfamily}
\end{equation}
then $\mu$ is a Gaussian measure, possibly with singular covariance.
Conversely, every Gaussian measure admits such a common penalty for
all $t>0$.
\end{theorem}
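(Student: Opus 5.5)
The plan is to turn a common regularizer into an exact functional identity between $D_s$ and $D_t$, pass to an infinitesimal version at the accumulation point, and read it as a PDE for $\log p_t$ that forces a Gaussian observation density.

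\emph{Step 1: exact compatibility.} Let $s<t$ both lie in $S$, let $y\in\R^d$, and set $z:=D_t(y)\in\Prox_{tR}(y)$ and $w:=(s/t)y+\alpha z$. The square expansion \eqref{sup:main:eq-square} in the proof of Theorem~\ref{sup:main:thm-uniform} uses only global minimality of $z$ at $(t,y)$. It therefore shows that $z$ is the \emph{unique} global minimizer of $R+\norm{\cdot-w}^2/(2s)$. The hypothesis gives $D_s(w)\in\Prox_{sR}(w)$, so $D_s(w)=z$. Hence
\[
 D_s\bigl(y+(s/t-1)(y-D_t(y))\bigr)=D_t(y)
 ~~(y\in\R^d,\ s<t,\ s,t\in S),
\]
which says the residual $C_{s,t}$ vanishes identically. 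For an arbitrary probability measure, $p_t$ from \eqref{sup:density} is positive and smooth jointly in $(t,y)$. Tweedie's formula gives $D_t=y+t\nabla\varphi_t$ with $\varphi_t:=\log p_t$, and $\varphi_t$ satisfies $\partial_t\varphi=\tfrac12(\Delta\varphi+\norm{\nabla\varphi}^2)$.

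\emph{Step 2: the infinitesimal identity.} Let $t_0\in(0,\infty)$ be an accumulation point of $S$. Pick distinct $s_n<t_n$ in $S$ converging to $t_0$; two distinct elements of $S$ can be found near $t_0$ even if $t_0\notin S$. Fix $y$ and Taylor expand the identity of Step 1 jointly in $(s,t)$ around $(t_0,t_0)$, using the $C^2$ dependence of $D$ on $(t,y)$. This gives
\[
 D_{s_n}(y)-D_{t_n}(y)+\frac{s_n-t_n}{t_n}JD_{s_n}(y)\bigl(y-D_{t_n}(y)\bigr)+O\bigl((t_n-s_n)^2\bigr)=0.
\]
Dividing by $t_n-s_n$ and letting $n\to\infty$ yields
\[
 \partial_tD_{t_0}(y)=-\tfrac1{t_0}JD_{t_0}(y)\bigl(y-D_{t_0}(y)\bigr).
\]
The $O((t_n-s_n)^2)$ bound is uniform for $y$ in compact sets, which suffices. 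Now substitute $JD=I+t\nabla^2\varphi$, $y-D=-t\nabla\varphi$ and $\partial_tD=\nabla\varphi+t\nabla\partial_t\varphi$. The right side becomes $\nabla\varphi+t\nabla(\tfrac12\norm{\nabla\varphi}^2)$, and the left side is $\nabla\varphi+t\nabla(\tfrac12\Delta\varphi+\tfrac12\norm{\nabla\varphi}^2)$. After cancellation this gives $\nabla\Delta\varphi_{t_0}=0$, so $\Delta\varphi_{t_0}\equiv\kappa$ is constant.

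\emph{Step 3: Liouville argument and deconvolution.} This is where I expect the main obstacle: a constant Laplacian alone does not force a quadratic. The additional ingredient is the one-sided bound
\[
 \nabla^2\varphi_{t_0}=\frac{\Cov(X\mid Y_{t_0}=y)}{t_0^2}-\frac{I_d}{t_0}\succeq-\frac{I_d}{t_0}.
\]
Write $\varphi_{t_0}=\frac{\kappa}{2d}\norm y^2+h$ with $h$ harmonic. Then $\nabla^2h$ has trace zero and eigenvalues bounded below by a constant $-c$. Hence each eigenvalue is also at most $(d-1)c$, so $\nabla^2h$ is bounded. Each entry $\partial_{ij}h$ is then a bounded harmonic function and hence constant by Liouville's theorem. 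So $\varphi_{t_0}$ is a quadratic polynomial, and integrability of $p_{t_0}$ makes $p_{t_0}$ a nondegenerate Gaussian density $N(m,\Gamma)$.

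Since $p_{t_0}$ is the density of $\mu*N(0,t_0I_d)$, characteristic functions give $\widehat\mu(\xi)=e^{i m^\top\xi-\frac12\xi^\top(\Gamma-t_0I_d)\xi}$. Because $\abs{\widehat\mu}\le1$, the matrix $\Gamma-t_0I_d$ is positive semidefinite, so $\mu=N(m,\Gamma-t_0I_d)$, possibly singular.

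\emph{Converse.} For $\mu=N(m,\Sigma)$ with $\Sigma$ possibly singular, take $R(z):=\tfrac12(z-m)^\top\Sigma^{+}(z-m)$ on $m+\operatorname{range}\Sigma$ and $+\infty$ elsewhere. The posterior mean is $D_t(y)=m+\Sigma(\Sigma+tI_d)^{-1}(y-m)$. A direct first-order computation for this strictly convex problem on the affine subspace $m+\operatorname{range}\Sigma$ verifies $\Prox_{tR}(y)=\{D_t(y)\}$ for every $t>0$.
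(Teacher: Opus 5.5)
Your proposal is correct and follows the paper's proof essentially step for step. The shared steps are the square-expansion compatibility identity, an infinitesimal identity at the accumulation point which, combined with Tweedie's formula and the heat equation, gives $\nabla\Delta\log p_{t_0}=0$, then the one-sided Hessian bound with Liouville's theorem, and finally deconvolution via characteristic functions. The only minor variation is that you extract the limiting identity by a spatial Taylor expansion followed by a difference quotient in $t$. The paper instead integrates the derivative of $u\mapsto D_u(q_n(u))$ over the shrinking interval $[s_n,t_n]$, and both are equally valid.
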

\begin{proof}
We first establish the smooth identities needed for the argument.
On any compact subset of $(0,\infty)\times\R^d$, the Gaussian kernel,
its derivatives, and their products with any fixed polynomial in $x$
are uniformly bounded as functions of $x$. Since $\mu$ is finite,
differentiation under the integral in \eqref{sup:density} and
\eqref{sup:denoiser} is valid to every order. Thus $p_t$ and $D_t$ are
smooth, and the positive denominator gives finite posterior moments
at every $(t,y)$.

Write $\ell_t(y)=\log p_t(y)$, and let $\pi_{t,y}$ be the probability
measure obtained by normalizing the Gaussian weighted measure in
\eqref{sup:denoiser}. Differentiating in $y$ gives Tweedie's identity \cite{efron2011tweedie} and the covariance formula
\begin{equation}
 D_t(y)=y+t\nabla\ell_t(y),~~
 JD_t(y)=\frac1t\Cov_{\pi_{t,y}}(X)\succeq0.
 \label{sup:tweedie}
\end{equation}
All entries of this posterior covariance are finite. Direct differentiation
of the kernel also gives
\begin{equation}
 \partial_t p_t=\tfrac12\Delta p_t,~~
 \partial_t\ell_t=\tfrac12\big(\Delta\ell_t+\norm{\nabla\ell_t}^2\big).
 \label{sup:heat}
\end{equation}
Combining \eqref{sup:tweedie} and \eqref{sup:heat} yields
\begin{equation}
 \partial_tD_t(y)-\frac1t JD_t(y)(D_t(y)-y)
 =\frac t2\nabla\Delta\ell_t(y).
 \label{sup:defect}
\end{equation}
Indeed, differentiating $D_t=y+t\nabla\ell_t$ in $t$ gives
$\nabla\ell_t+(t/2)\nabla\Delta\ell_t
+t\nabla^2\ell_t\nabla\ell_t$, and
$t^{-1}JD_t(D_t-y)=(I_d+t\nabla^2\ell_t)\nabla\ell_t$.

We next extract a differential identity from the common penalty.
For any $s<t$ in $S$, set $z=D_t(y)$ and
$w=(s/t)y+(1-s/t)z$. With
$Q_u(x;q)=R(x)+\norm{x-q}^2/(2u)$, expansion gives
\begin{equation}
 Q_s(x;w)-Q_s(z;w)
 =Q_t(x;y)-Q_t(z;y)
 +\left(\frac1{2s}-\frac1{2t}\right)\norm{x-z}^2.
 \label{sup:resolventproof}
\end{equation}
The first difference on the right is nonnegative, and the last term is
strictly positive for $x\ne z$. Hence $z$ is the unique minimizer at
$(s,w)$, and \eqref{sup:commonfamily} implies
\begin{equation}
 D_s\big((s/t)y+(1-s/t)D_t(y)\big)=D_t(y).
 \label{sup:compatibility}
\end{equation}

Let $t_*>0$ be an accumulation point of $S$, and choose distinct
$s_n<t_n$ in $S$ with $s_n,t_n\to t_*$. Fix $y\in\R^d$, put
$z_n=D_{t_n}(y)$, and define
\[
 q_n(u)=z_n+\frac{u}{t_n}(y-z_n),~~
 H_n(u)=D_u(q_n(u)),~~ s_n\le u\le t_n.
\]
Equation \eqref{sup:compatibility} gives
$H_n(s_n)=H_n(t_n)=z_n$. Therefore
\begin{equation}
 0=\frac1{t_n-s_n}\int_{s_n}^{t_n}
 \left[\partial_uD_u(q_n(u))
 +JD_u(q_n(u))\frac{y-z_n}{t_n}\right]du.
 \label{sup:shrinkinginterval}
\end{equation}
As $n\to\infty$, $z_n\to D_{t_*}(y)$ and $q_n(u)\to y$ uniformly
on the shrinking interval. Smoothness makes the integrand converge
uniformly, so
\begin{equation}
 \partial_tD_{t_*}(y)+\frac1{t_*}JD_{t_*}(y)(y-D_{t_*}(y))=0.
 \label{sup:limitidentity}
\end{equation}
Together with \eqref{sup:defect}, this gives
$\nabla\Delta\log p_{t_*}=0$ on $\R^d$.

Set $U=-\log p_{t_*}$. Its Laplacian is a constant, say $k$, while
\eqref{sup:tweedie} implies
\begin{equation}
 \Delta U=k,~~
 \nabla^2U=\frac1{t_*}(I_d-JD_{t_*})\preceq\frac1{t_*}I_d.
 \label{sup:hessianbounds}
\end{equation}
If $\lambda_1,\ldots,\lambda_d$ are the Hessian eigenvalues, then
$\lambda_i\le1/t_*$ and $\sum_i\lambda_i=k$, whence
$\lambda_i\ge k-(d-1)/t_*$. Every Hessian entry is consequently bounded
on $\R^d$. Each is also harmonic, since
\[
 \Delta(\partial_i\partial_jU)
 =\partial_i\partial_j(\Delta U)=0.
\]
The Liouville theorem for bounded harmonic functions
\cite[Theorem~2.1]{axler2001harmonic} makes these entries constant.
For $d=1$ this follows directly from $U''=k$.
Thus $U$ is a quadratic polynomial. Integrability of $e^{-U}$ forces
its quadratic part to be positive definite: after orthogonal
diagonalization, a zero or negative eigenvalue would make the
integral along that coordinate infinite. Hence $p_{t_*}$ is a
Gaussian density with some mean $m$ and positive definite covariance $C$.

For the characteristic function $\widehat\mu(\xi):=\int e^{i\xi^\top x}\mu(dx)$, where $\xi\in\R^d$ and $i^2=-1$, independence of $X$ and $Z$ gives
\begin{equation}
 \widehat\mu(\xi)e^{-t_*\norm\xi^2/2}
 =e^{i m^\top\xi-\xi^\top C\xi/2},~~
 \widehat\mu(\xi)=
 e^{i m^\top\xi-\xi^\top(C-t_*I_d)\xi/2}.
 \label{sup:characteristic}
\end{equation}
Since $|\widehat\mu(\xi)|\le1$ for every $\xi$, the matrix
$\Sigma=C-t_*I_d$ is positive semidefinite. The last expression is the
characteristic function of $N(m,\Sigma)$. Applied to each linear
projection of $X$, scalar uniqueness of characteristic functions
\cite[Section~16.6]{williams1991probability} shows that every such
projection is Gaussian with the stated mean and variance. Thus
$\mu=N(m,\Sigma)$.

Conversely, suppose $\mu=N(m,\Sigma)$ with $\Sigma\succeq0$. Let
$\Sigma^\dagger$ be its Moore--Penrose inverse and set
\begin{equation}
 R(x)=\tfrac12(x-m)^\top\Sigma^\dagger(x-m)
      +\iota_{m+\operatorname{range}\Sigma}(x).
 \label{sup:gaussianpenalty}
\end{equation}
Minimizing in an orthonormal eigenbasis of $\Sigma$ gives, for every
$t>0$,
\begin{equation}
 \Prox_{tR}(y)=
 \left\{m+\Sigma(\Sigma+tI_d)^{-1}(y-m)\right\}
 =\{D_t(y)\}.
 \label{sup:gaussianprox}
\end{equation}
This includes a point mass when $\Sigma=0$.
\end{proof}

The differential identity also gives the local form of the finite
residual. For fixed $(t,y)$ and $h\downarrow0$, Taylor expansion yields
\begin{equation}
 C_{t-h,t}(y)
 =-h\left[\partial_tD_t(y)-\frac1tJD_t(y)(D_t(y)-y)\right]+O(h^2)
 =-\frac h2\Delta D_t(y)+O(h^2),
 \label{sup:smallgap}
\end{equation}
since $\Delta D_t=t\nabla\Delta\ell_t$. Thus nearby variances measure
the same differential compatibility condition that appears in
\eqref{sup:limitidentity}.

\section{Numerical methods and results}
\label{sup:numerics}
The experiments evaluate the change of the canonical penalty, the
finite residual, and excess risk. They use exact finite sums or
deterministic quadrature. All plotted values are supplied as CSV files,
and the script \path{experiments/code/run_experiments.py} reproduces
the figure and the numerical tables. No external dataset is required.

\subsection{Priors and evaluation paths}
Table~\ref{sup:priors} specifies the distributions. In the compact cases,
the endpoint $a$ maximizes the Euclidean norm over the support. For the
Gaussian control, $a=1$ specifies an ordinary mean coordinate.
\begin{table}[ht]
\centering
\caption{Distributions used in the numerical experiments.}
\label{sup:priors}
\begin{tabular}{@{}lcc@{}}
\toprule
Label & Distribution & Endpoint $a$\\
\midrule
Binary & $\Pr(X=-1)=\Pr(X=1)=1/2$ & $1$\\
Four points & $(-1.2,-0.1,0.6,1.8)$ with masses $(.12,.38,.29,.21)$ & $1.8$\\
Triangle & $(-1,-.4),(1.1,-.3),(.2,1.3)$ with masses $(.2,.5,.3)$ & $(.2,1.3)$\\
Uniform & Uniform on $[-1,1]$ & $1$\\
Gaussian & $N(0,1)$ & $1$\\
\bottomrule
\end{tabular}
\end{table}

The first experiment fixes $s=2$ and $t=3$ and follows
$z_\lambda=(1-\lambda)\E X+\lambda a$ at 100 equally spaced values of
$\lambda\in[0,0.99]$. For each $u\in\{s,t\}$, solve
$\nabla A_u(\theta)=z_\lambda$ and evaluate
$\calR_u(z_\lambda)=\theta^\top z_\lambda-A_u(\theta)
-\norm{z_\lambda}^2/(2u)$.
Finite priors use normalized exponentials evaluated after subtracting
the largest exponent. Scalar inversion uses a bracketed root solver;
the triangular prior uses the covariance as the Jacobian in the mean
equation. Uniform integrals use Gauss--Legendre quadrature with 256 nodes
on $[-1,1]$. Repeating both the penalty and variance evaluations at all
100 mean coordinates with 128 and 512 nodes checks the quadrature
accuracy. The largest change from 256 to 512 nodes, including the
residual evaluations below, is $4.690\times10^{-13}$. These refinement
differences assess numerical stability; they are not the uniform
denoiser error bounds $\eta_u$ in Section~\ref{sup:approximation}.

At $\lambda\in\{0,0.5,0.9,0.99\}$ for all five priors, the script
also compares the direct difference $\calR_2-\calR_3$ with
\eqref{sup:timeidentity}. The variance integral uses adaptive quadrature
with absolute and relative tolerances $2\times10^{-12}$.
A centered difference with increment $2\times10^{-4}$ checks
$\partial_s\calR_s=-V_s/(2s^2)$.
For the Gaussian control, all quantities have explicit forms:
\begin{equation}
 D_u(y)=\frac{y}{1+u},~~ V_u(z)=\frac{u}{1+u},~~
 \calR_u(z)=\frac{z^2}{2}+\frac12\log(1+1/u).
 \label{sup:gaussiancontrol}
\end{equation}
Thus $\calR_2-\calR_3=\tfrac12\log(9/8)$ is constant. This additive normalization does not affect minimizers.
The computed compact-prior differences are positive and become small near the sampled endpoints, consistent with the boundary limit in the main article. The four point example also shows that the
difference need not decrease monotonically along the entire path.

The second experiment fixes $s=2$ and uses 100 equally spaced variances
$t\in[2.02,6]$. In dimension $d$, the prescribed observation is
\begin{equation}
 y=t\operatorname{arctanh}(1/2)\frac{(1,\ldots,1)^\top}{\sqrt d}.
 \label{sup:witnessrule}
\end{equation}
The script evaluates $C_{2,t}(y)$ and the uniform error lower bound
$\norm{C_{2,t}(y)}/[2+(1-2/t)L_2]$.
For each compact prior, $L_2=B^2/8$ uses its support diameter;
the Gaussian uses its exact Lipschitz constant $L_2=1/3$.
Table~\ref{sup:witnesses} gives the values at $t=3$.
Here $\operatorname{arctanh}$ denotes the inverse hyperbolic tangent, and $(1,\ldots,1)^\top/\sqrt d$ is a unit vector. For the binary prior, the chosen input gives $D_t(y)=1/2$. The inputs are fixed by \eqref{sup:witnessrule}, so the table records
evaluated witnesses. Across the 100 variance pairs, the Gaussian
residual is at most $1.111\times10^{-16}$.
\begin{table}[ht]
\centering
\caption{Residuals and uniform error bounds at the prescribed inputs, $s=2,t=3$.}
\label{sup:witnesses}
\begin{tabular}{@{}lrrr@{}}
\toprule
Prior & $L_2$ & $\norm{C_{2,3}(y)}$ & Uniform error lower bound\\
\midrule
Binary & $0.5000$ & $0.059867024$ & $0.027630934$\\
Four points & $1.1250$ & $0.004537031$ & $0.001910329$\\
Triangle & $0.5525$ & $0.031124423$ & $0.014250022$\\
Uniform & $0.5000$ & $0.004749003$ & $0.002191848$\\
Gaussian & $1/3$ & $0$ & $0$\\
\bottomrule
\end{tabular}
\end{table}

\subsection{A closed form binary witness}
For $X=\pm a$, where $a>0$, with equal probabilities, the posterior mean is
$D_t(y)=a\tanh(ay/t)$. On $|z|<a$,
\begin{equation}
 \calR_t(z)=h(z/a)+\frac{a^2-z^2}{2t},~~
 h(r)=\frac{(1+r)\log(1+r)+(1-r)\log(1-r)}{2}.
 \label{sup:binarypenalty}
\end{equation}
Use $0\log0=0$ at the endpoints and extend the penalty by $+\infty$
outside $[-a,a]$. Direct subtraction gives
$\calR_s(z)-\calR_t(z)=(1/(2s)-1/(2t))(a^2-z^2)$.
With $v=ay/t$ and $c=a^2(1/s-1/t)>0$,
\begin{equation}
 C_{s,t}(y)=a\{\tanh(v+c\tanh v)-\tanh v\}.
 \label{sup:binaryresidual}
\end{equation}
For $y=(t/a)\operatorname{arctanh}(1/2)$, set $b=\tanh(c/2)$.
The addition formula for the hyperbolic tangent yields
\begin{equation}
 \delta=\norm{C_{s,t}(y)}=\frac{3ab}{4+2b},~~
 \max_{u\in\{s,t\}}\norm{P_u-D_u}_\infty
 \geq\frac{3a\tanh(c/2)}{[4+2\tanh(c/2)](2+c)}.
 \label{sup:binarycertificate}
\end{equation}
The binary row of Table~\ref{sup:witnesses} follows from this formula
with $a=1,s=2,t=3$.

\subsection{Excess risk when a binary penalty is reused}
For the equal binary prior on $\{-1,1\}$, consider the family
\begin{equation}
 R_\tau(z):=\begin{cases}h(z)-z^2/(2\tau),&|z|\le1,\\+\infty,&|z|>1,\end{cases}
 ~~ 2\leq\tau\leq3,
 \label{sup:riskfamily}
\end{equation}
with $h$ from \eqref{sup:binarypenalty}. Its members are convex, since
$h''(z)=1/(1-z^2)\geq1$ on $(-1,1)$.
The constant omitted from $\calR_\tau$ has no effect on minimizers.
At variance $u\in\{2,3\}$, denote the unique proximal minimizer by $P_{u,\tau}(y)$. It is $z=\tanh v$, where
\begin{equation}
 v+(1/u-1/\tau)\tanh v=y/u.
 \label{sup:proxroot}
\end{equation}
The derivative of the left side is at least $5/6$. We solve this scalar
equation by bracketing, with absolute tolerance $10^{-13}$ and relative
tolerance $10^{-14}$.

The excess risk is the expectation of
$(P_{u,\tau}(Y_u)-D_u(Y_u))^2$. The distribution of $Y_u$ is the equal mixture of
$N(-1,u)$ and $N(1,u)$. Gauss--Hermite quadrature with 320 nodes per
component evaluates this expectation at 101 equally spaced values of
$\tau\in[2,3]$. Repeating all evaluations with 160 nodes changes the
risks by at most $3.597\times10^{-16}$.
A scalar root gives the intersection of the two risk curves at
$\tau=2.41699607508038$. Independent adaptive integration over the
real line checks both endpoints and this intersection, with absolute
tolerance $2\times10^{-13}$ and relative tolerance $2\times10^{-11}$.
The largest discrepancy is $6.723\times10^{-17}$.
\begin{table}[ht]
\centering
\caption{Excess mean squared errors in the family \eqref{sup:riskfamily}.}
\label{sup:risks}
\begin{tabular}{@{}lcc@{}}
\toprule
$\tau$ & Variance $2$ & Variance $3$\\
\midrule
$2$ & $0$ & $2.580773094\times10^{-3}$\\
$2.41699607508038$ & $5.628146618\times10^{-4}$ & $5.628146618\times10^{-4}$\\
$3$ & $2.002533732\times10^{-3}$ & $0$\\
\bottomrule
\end{tabular}
\end{table}

The zeros in Table~\ref{sup:risks} are exact at the matched variance;
the corresponding numerical values are below $1.4\times10^{-33}$.
The intersection balances the two excess risks within
\eqref{sup:riskfamily}. For the binary witness, substitute $s=2$, $t=3$, $d=1$, $M=1$, $L=1/2$, $\alpha=1/3$, and $\omega_1=2$ into \eqref{sup:riskbound}. The input is $y_0=3\operatorname{arctanh}(1/2)$, and the constants in \eqref{sup:riskconstants} are $a\approx0.027630934$ and $b=y_0\approx1.647918433$. The resulting universal bound is $1.2916082364\times10^{-8}$, substantially below the larger risks observed in the family $R_\tau$.

\subsection{Verification data}
Table~\ref{sup:checks} records the discrepancies from the experiment
script. Its JSON output includes the selected observations, posterior
means, transformed observations, relative entropy bound slacks, and
environment versions. The calculations used Python~3.12.14,
NumPy~2.3.5, SciPy~1.17.0, and Matplotlib~3.10.8.
\begin{table}[ht]
\centering
\caption{Numerical consistency checks for the reported experiments.}
\label{sup:checks}
\begin{tabular}{@{}lc@{}}
\toprule
Comparison & Largest absolute discrepancy\\
\midrule
Variance integral and canonical penalty difference & $4.227\times10^{-15}$\\
Centered derivative and $-V_s/(2s^2)$ & $1.510\times10^{-9}$\\
Mean inversion at all plotted mean coordinates & $1.633\times10^{-13}$\\
Uniform quadrature, 256 versus 512 nodes & $4.690\times10^{-13}$\\
Risk quadrature, 160 versus 320 nodes & $3.597\times10^{-16}$\\
Risk quadrature and independent adaptive integration & $6.723\times10^{-17}$\\
Gaussian residual across all 100 variance pairs & $1.111\times10^{-16}$\\
\bottomrule
\end{tabular}
\end{table}

The additional script \path{verification/code/verify_extensions.py}
checks four symbolic identities, fixed mean comparisons for eight
finite priors, 15 relative entropy identities, 48 boundary path points,
and 2560 local certificate inequalities. Its common penalty controls
include convex quadratics and indicators of finite sets, whose global
proximal selections can be discontinuous. Gaussian controls also
include singular covariances and point masses.

\end{document}